\DeclareMathOperator{\opint}{int}
\DeclareMathOperator{\supp}{supp}
\DeclareMathOperator{\W}{W}
\DeclareMathOperator{\id}{Id}
\DeclarePairedDelimiter{\norm}{\lVert}{\rVert}
\DeclarePairedDelimiter{\abs}{\lvert}{\rvert}
\DeclarePairedDelimiterX{\inner}[2]{\langle}{\rangle}{{#1},{#2}}
\newcommand{\R}{\mathbb{R}}
\newcommand{\N}{\mathbb{N}}
\newcommand{\OT}{\mathcal{T}}
\newcommand{\calV}{\mathcal{V}}
\newcommand{\calX}{\mathcal{X}}
\newcommand{\calY}{\mathcal{Y}}
\newcommand{\calP}{\mathcal{P}}
\newcommand{\calS}{\mathcal{S}}
\newcommand{\calU}{\mathcal{U}}
\newcommand{\calF}{\mathcal{F}}
\newcommand{\calC}{\mathcal{C}}
\newcommand{\calH}{\mathcal{H}}
\newcommand{\calL}{\mathcal{L}}
\newcommand{\calI}{\mathcal{I}}
\newcommand{\calN}{\mathcal{N}}
\newcommand*{\bigcdot}{}%
\DeclareRobustCommand*{\bigcdot}{%
	\mathbin{\mathpalette\bigcdot@{}}%
}
\newcommand*{\bigcdot@scalefactor}{.5}
\newcommand*{\bigcdot@widthfactor}{1.15}
\newcommand*{\bigcdot@}[2]{%
	\sbox0{$#1\vcenter{}$}%
	\sbox2{$#1\cdot\m@th$}%
	\hbox to \bigcdot@widthfactor\wd2{%
		\hfil
		\raise\ht0\hbox{%
			\scalebox{\bigcdot@scalefactor}{%
				\lower\ht0\hbox{$#1\bullet\m@th$}%
			}%
		}%
		\hfil
	}%
}
\newcommand{\argdot}{\,\bigcdot\,}
\newcommand{\restr}[2]{{\left.\kern-\nulldelimiterspace #1 \vphantom{\big|} \right|_{#2}}}
\newcommand{\indic}{\mathop{}\!\mathbb{I}}
\newcommand{\indicfunc}[1]{\indic({#1})}
\newcommand{\de}{\mathop{}\!\mathrm{d}}
\newcommand*{\transp}{%
	{\mathpalette\@transpose{}}%
}
\newcommand*{\@transpose}[2]{%
	\raisebox{\depth}{$\m@th#1\intercal$}%
}
\newcommand{\defeq}{\coloneqq}
\theoremstyle{plain}
\newtheorem{sectioncount}{xxxxxxx}[section]
\newtheorem{theorem}[sectioncount]{Theorem}
\newtheorem{lemma}[sectioncount]{Lemma}
\newtheorem{corollary}[sectioncount]{Corollary}
\theoremstyle{definition}
\newtheorem{example}[sectioncount]{Example}
\newtheorem{remark}[sectioncount]{Remark}
\newcommand{\footremember}[2]{
	\footnote{#2}
	\newcounter{#1}
	\setcounter{#1}{\value{footnote}}
}
\newcommand{\footrecall}[1]{
	\footnotemark[\value{#1}]
}
\begin{document}

\title{Nonlinear Inverse Optimal Transport: Identifiability of the Transport Cost from its Marginals and Optimal Values\thanks{
\textbf{Founding: }The work of the second author was supported by the Deutsche Forschungsgemeinschaft (DFG, German Research Foundation) through the GRK 2088 and the work of the third author by the DFG through the SFB 1456.}}

\author{Alberto Gonz\'{a}lez-Sanz\footremember{columbia}{Department of Statistics, Columbia University, New York, United States (\href{mailto:ag4855@columbia.edu}{ag4855@columbia.edu}).}
\and Michel Groppe\footremember{ims}{Institute for Mathematical Stochastics, University of Göttingen, 37077 Göttingen, Germany (\href{mailto:michel.groppe@uni-goettingen.de}{michel.groppe@uni-goettingen.de}, \href{mailto:munk@math.uni-goettingen.de}{munk@math.uni-goettingen.de}).}
\and Axel Munk\footrecall{ims}}
\date{June 18, 2024}

\maketitle

\begin{abstract}
The inverse optimal transport problem is to find the underlying cost function from the knowledge of optimal transport plans. While this amounts to solving a linear inverse problem, in this work we will be concerned with the nonlinear inverse problem to identify the cost function when only a set of marginals and its corresponding optimal values are given. We focus on absolutely continuous probability distributions with respect to the $d$-dimensional Lebesgue measure and classes of concave and convex cost functions. Our main result implies that the cost function is uniquely determined from the union of the ranges of the gradients of the optimal potentials. Since, in general, the optimal potentials may not be observed, we derive sufficient conditions for their identifiability --- if an open set of marginals is observed, the optimal potentials are then identified via the value of the optimal costs. We conclude with a more in-depth study of this problem in the univariate case, where an explicit representation of the transport plan is available. Here, we link the notion of identifiability of the cost function with that of statistical completeness.
\end{abstract}

\noindent \textit{Keywords}: inverse optimal transport, identifiability, cost function, completeness, optimal potentials

\noindent \textit{MSC 2020 subject classification}: 49Q22, 45Q05, 60E10

\section{Introduction} \label{sec:intro}

Based on the theory of optimal transport (OT), a long standing and well investigated topic in mathematics and related fields (see e.g.\ the seminal paper by Kantorovich \& Rubinstein \cite{Kantorovich58} or \cite{Rachev98a,Rachev98b,villani2008optimal,Ambrosio2005} for textbook references), real world data analysis based on OT (and related notions) has become a widespread area of research only more recently, mainly due to significant computational progress (see e.g.\ \cite{Peyr2018ComputationalOT,Santambrogio2015}) and the development of a profound statistical theory (see e.g.\ \cite{Panaretos2020}). Beyond economics (see e.g.\ \cite{Kantorovich1960,GalichonBook}) applications meanwhile range from mathematical imaging \cite{Bonneel2011,Courty2014OptimalTF,Lara} and computer graphics \cite{Bonneel23} to machine learning (fair learning \cite{pmlr-v97-gordaliza19a,Risser2019TacklingAB}, generative models \cite{Arjovsky2017WassersteinG,Pooladian2023MultisampleFM}, manifold learning \cite{zhang2023manifold}) to statistics (multivariate quantiles \cite{ChernozhukovQuantiles,Hallin}, dependency modeling \cite{nies2023transport,Wiesel2022} and goodness of fit testing \cite{Deb2023,del1999tests}, inference \cite{Sommerfeld2017,Panaretos2019,Munk1998} and regression \cite{carlierQuantile,delbarrio2022nonparametric,Chen2023}) or domain applications such as cell biology \cite{tameling2021colocalization,Schiebinger2019}, to mention a few.

The OT problem can be informally defined as finding, among all possibilities to match two probability measures, the one that minimizes the average transport cost for a given cost function $c$. While most of research assumes a given known cost function, recently the problem when the cost is unknown became of interest, motivated by various applications. This has been coined as ``inverse optimal transport'' in \cite{InverseSIam} and the authors aim to recover the cost function underlying migration flows from observed OT plans. Another possible application concerns the recovery of consumer preferences (Hotelling's location problem, see \cite[Section~5.1]{GalichonBook}): Suppose that we observe where people shop a certain item, i.e., a matching between people and shops, and we assume that this selection of a shop is driven by some unknown cost function reflecting the preferences of consumers. From which information can we potentially recover this cost function? We discuss and formalize this example in more detail in \autoref{subsubsec:economical_example}. As a final motivating example we mention the recovery of the internal cost to match protein assemblies in the compartment of the cell. With modern super-resolution microscopy it is possible to visualize the locations of such different protein complexes (e.g. receptors, transporters or other functional units) at nanoscale accuracy. For understanding protein function and communication it is important to mathematically describe and model how the cell organizes this spatial formation, often denoted as colocalization. It has been shown recently that OT plans based on Euclidean cost, estimated from the data, can quantify colocalization in a biological meaningful way \cite{tameling2021colocalization}. Instead of assuming a fixed (hypothetical) cost function, we speculate that it might be possible even to recover the corresponding cost function (encoding the internal organization of the cell) from such super-resolution images (i.e. the OT marginals) together with additional measurements of the total energy (i.e. the corresponding total OT costs).

Since the pioneering work of \cite{InverseSIam} the topic of inverse OT has undergone a flourishing development, see \autoref{subsec:state_of_art}. In all these papers it is assumed that the OT plan is observable (i.e.\ known). From this the corresponding cost is to be recovered, typically by a Bayesian approach specifying a prior on the set of cost matrices \cite{InverseSIam}. Obviously, this is a severely ill-posed problem in the sense that from the given OT plans and such prior the cost cannot be identified, in general. We take this problem of non-identifiability as a starting point to investigate the more general question under which conditions the cost function will be (uniquely) identifiable from certain information in the solution of the OT problem, see \autoref{subsec:contrib}.

OT, as a convex optimization problem, admits a dual formulation. Therefore, when solving it, we can obtain three objects: the optimal transport plan (OP) (the minimizer of the primal problem), the potentials (the maximizer of the dual problem), and the optimal total cost (OC) (the optimal value of the objective function). Each of these objects has its own interpretation and may be accessible or not, depending on the specific application. In the case under study, the practitioner may observe one or several of these objects. For example, in Hotelling's location problem the OT potentials are assumed to be known, whereas in the colocalization problem an ensemble of pairs of marginals is observed together with the optimal total costs. In particular, we will always assume that the marginals are observed, either explicitly or implicitly through the OPs.

\subsection{Contributions and Organization of the Work} \label{subsec:contrib}

The main contribution of this work is to establish sufficient conditions under which the cost in the inverse OT problem is identifiable, i.e., the cost (in a suitable space of cost functions) is uniquely determined by (a combination of) the abovementioned input objects. This will set some fundaments for further investigation how the cost function of a system can be recovered algorithmically as well as in a statistical context.

\autoref{subsec:state_of_art} provides a general overview of the inverse OT problem. \autoref{section:notation} introduces the notation used throughout the paper. In \autoref{section:inverseOT}, the standard and inverse OT problems are presented. \autoref{section:identi} contains the main results of this article. \autoref{subsec:convex_cost} provides sufficient identifiability conditions for convex costs and \autoref{subsec:concave_cost} does the same for concave costs. Finally, \autoref{sec:real_line} focuses on the univariate case where more refined results can be obtained.

The main results are presented in a decreasing order of assumed prior knowledge. Specifically, \autoref{thm:convex_cost_pi_dual_sol_known} (resp.\ \autoref{thm:concave_cost_dual_sol_known}) provides a uniqueness result for convex costs (resp.\ concave costs), assuming knowledge of the plans, optimal total costs, and potentials. In both cases, the cost function is determined within the union of the ranges of the gradients of the OT potentials. As knowing the OT potentials is not always possible, in \autoref{thm:same_dual_sol}, we derive sufficient conditions for continuous cost functions to identify the OT potentials from the OT cost values. In particular, \autoref{thm:same_dual_sol} assumes uniqueness of OT potentials (up to additive constants). By employing \cite[Corollary~2.7]{del2021central} for convex costs and \autoref{lemma:concave_cost_unique_potentials_supp} for concave costs, sufficient conditions for the mentioned uniqueness of potentials are obtained. These results enable us to formulate sufficient identifiability conditions for the cost function in terms of marginal probability measures -- refer to \autoref{thm:convex_cost_pi_unknown} for the convex cost case and \autoref{thm:concave_cost_dual_sol_unknown} for the concave cost case.

In the univariate case, our results can be sharpened further and we study in depth identifiability for convex cost functions and location-scale families. Notably, \autoref{thm:convex_cost_onedim_g_transform} links identifiability of the cost function with injectivity of an integral transform (namely the $g$-transform \eqref{eq:g_transform_def}) and the well known problem of statistical completeness in location-scale families \cite{Mattner1992}.

\subsection{State of the Art of Inverse Optimal Transport} \label{subsec:state_of_art}

As highlighted by \cite{andrade2023sparsistency}, the problem of estimating the cost function is related to ground metric learning, for which there are also OT based proposals \cite{zhang2023manifold,Huizing2021UnsupervisedGM}. However, the latter is generally simpler, as it does not involve a global matching between sets of points. That is, the inverse OT problem requires (as an input) pairs of probability distributions instead of pairs of data points. Until now, this problem has mainly been dealt with in machine learning (see \cite{Dupuy2016EstimatingMA,InverseJML,InversePMLR,InverseSIam,InverseCarlier}) and the focus has been on estimating the cost function from observable entropic transport plans, i.e., when the Kullback-Leibler divergence is added as a penalty to the objective function. The estimation proposed by \cite{Dupuy2016EstimatingMA} involves maximizing a likelihood function. However, such function is not strictly convex. Therefore, multiple maxima may exist, in general. \cite{InverseCarlier} formulated a modified problem that incorporates a Lasso penalty that enforces sparsity of the cost function, which is a desirable property in economic applications where agents only match to certain features. With this penalty the optimization problem becomes strictly convex and the argmax is a singleton (see \cite[Section~3.1]{andrade2023sparsistency}). However, this does not imply identifiability on the regularized optimal transport cost.

In summary, all the previously mentioned works provide (algorithmic) estimators of the cost function from a sample in different scenarios. However, there is no rigorous analysis under which situations the inverse OT problem is identifiable and the aim of this work is to fill this gap. Indeed, this is a minimal condition for the convergence of any algorithm to the true underlying cost function.

\subsection{Notation} \label{section:notation}

Let $\R^d$ be equipped with the Euclidean norm $\norm{\argdot}$ and the standard scalar product $\inner{\argdot}{\argdot}$. Denote with $\calP(\R^d)$ the set of Borel probability measures and for $p \geq 1$ with $\calP_p(\R^d)$ the set of probability measures with $p$-moment, i.e., all $\mu \in \calP(\R^d)$ such that $\int \norm{x}^p \de{\mu(x)} < \infty$. Furthermore, we equip $\calP_p(\R^d)$ with the $p$-Wasserstein distance $\W_p$ and the topology induced by it, see also \cite[Chapter~6]{villani2008optimal}. Denote with $\lambda_d$ the Lebesgue measure on $\R^d$. A probability measure $\mu \in \calP(\R^d)$ that is absolutely continuous w.r.t.\ the Lebesgue measure is denoted as $\mu \ll \lambda_d$ and $\supp \mu$ is the (closed) support of $\mu$. We say that $\mu$ has negligible boundary if $\lambda_d(\supp \mu \setminus \opint [\supp \mu]) = 0$, where $\opint A$ denotes the interior of a set $A \subseteq \R^d$. We call a Borel measurable set $A \subseteq \R^d$ rectifiable of dimension $d-1$ if it can be covered using countable many $(d-1)$-dimensional Lipschitz submanifolds of $\R^d$. For two probability measures $\mu, \nu \in \calP(\R^d)$, write $[\mu - \nu]_+ - [\nu - \mu]_+$ for the Jordan decomposition of $\mu - \nu$ into the difference of two non-negative mutually singular measures.

Denote with $\calC(\R^d)$ the set of all continuous functions on $\R^d$. For a convex function $h : \R^d \to \R$, let $h^*$ be the convex conjugate of $h$ \cite{rockafellar2009variational}, i.e.,
\begin{equation*}
	h^* : \R^d \to \R\,, \quad y \mapsto \sup_{x \in \R^d} \{ \inner{x}{y} - h(x) \}\,.
\end{equation*}
If $h$ is strictly convex and differentiable, note that $h^*$ is differentiable with $\nabla h^* = [\nabla h]^{-1}$. Further, for a concave function $l : [0, \infty) \to [0, \infty)$, define the concave conjugate of $h \defeq l \circ \norm{\argdot}$ by
\begin{equation*}
	h^* : \R^d \to \R\,, \quad y \mapsto -(-l)^*(-\norm{y})\,,
\end{equation*}
where $-l$ is convex and extended to $(-\infty, 0)$ by setting it to $\infty$.

\section{From Forward to Inverse Optimal Transport}\label{section:inverseOT}

In this section, we first introduce the problem of OT in its usual formulation. Then, we will explain the inverse problem.

\subsection{Optimal Transport}

Let $c : \R^d \times \R^d \to [0, \infty)$ be a continuous cost function. The OT problem between two probability measures $\mu \in \calP(\R^d)$, $\nu \in \calP(\R^d)$ with respect to (w.r.t.) the cost function $c$ is defined as
\begin{equation} \label{eq:OT}
	\OT_c(\mu,\nu) = \min_{\pi \in \Pi(\mu,\nu)} \int c \de{\pi}\,,
\end{equation}
where $\Pi(\mu,\nu)$ represents the set of probability measures on $ \calP(\R^d \times \R^d)$ with marginals $\mu$ and $\nu$. We call a minimizer $\pi$ an OT plan. If existent, a measurable map $T : \R^d \to \R^d$ with $T_{\#} \mu = \nu$ is called the OT map if $(\id, T)_{\#} \mu$ is an OT plan.

Furthermore, \eqref{eq:OT} admits the following dual formulation
\begin{equation} \label{eq:dual}
	\OT_c(\mu,\nu)=\max_{(f,g) \in \Phi_c(\mu, \nu)} \int f \de\mu +\int g \de\nu,
\end{equation}
where
\begin{equation*}
	\Phi_c(\mu, \nu) \defeq \{(f,g)\in L^1(\mu)\times L^1(\nu): \ f(x)+g(y)\leq c(x,y)\ \text{for all } x,y\in \R^d \}\,.
\end{equation*}
Additionally, we denote with $\calS_{c}(\mu, \nu)$ the set of optimal potentials for $\OT_c(\mu, \nu)$, i.e.,
\begin{equation*}
	\calS_c(\mu, \nu) \defeq \{ (f, g) \in \Phi_c(\mu, \nu) : \int f\de{\mu} + \int g\de{\nu} = \OT_c(\mu, \nu) \}\,.
\end{equation*}

\subsection{Inverse Optimal Transport}

Let $\calH$ be some class of cost functions. Given an underlying but unknown cost function $c \in \calH$, suppose that we observe a set of marginals $\{ (\mu^{(v)}, \nu^{(v)}) \}_{v \in \calV} \subseteq \calP(\R^d) \times \calP(\R^d)$ indexed by some non-empty set $\calV$. Moreover, for each $v \in \calV$ we are given (a subset of) the OT information between $\mu^{(v)}$ and $\nu^{(v)}$ w.r.t.\ $c$: \begin{enumerate}[(i)] \item\label{enum:ot_cost} The corresponding OT costs $\alpha^{(v)} \in \R$, \item\label{enum:ot_plan} OT plan $\pi^{(v)} \in \Pi(\mu^{(v)}, \nu^{(v)})$ and \item\label{enum:ot_pot} optimal potentials $(f^{(v)}, g^{(v)}) \in \Phi_c(\mu^{(v)}, \nu^{(v)})$ such that \end{enumerate}
\begin{equation*}
	\alpha^{(v)} = \OT_c(\mu^{(v)}, \nu^{(v)}) = \int c \de{\pi^{(v)}} = \int f^{(v)} \de{\mu^{(v)}} + \int g^{(v)} \de{\nu^{(v)}}\,.
\end{equation*}
The goal of inverse OT is to infer the underlying cost function $c$ from (a subset of) the above OT information \ref{enum:ot_cost}-\ref{enum:ot_pot}.

\subsubsection{Economical example \texorpdfstring{\small (Hotelling's location model \cite[Section 5.1]{GalichonBook})}{(Hotelling's location model)}} \label{subsubsec:economical_example}
Let $\calX$ represent a city and $\calY$ a set of shops that sell some item $I$. We are given probability distributions $\mu$ and $\nu$ on $\calX$ and $\calY$, that is the distribution of citizens and items, respectively. For each shop $y \in \calY$ we observe the price $z(y)$ of item $I$ and for each citizen $x \in \calX$ the effort $e(x)$ they make to buy the item (e.g. the time they spent to get to the shop). Furthermore, we have access to a map $T$ that gives for each citizen $x \in \calX$ the shop $y = T(x)$ they buy the item $I$ at. The price $z$ and effort $e$ could be obtained through a survey, whereas $T$ might be available by the use of tracking data.

We assume that there exists some underlying cost function $c$ such that: Each citizen $x \in \calX$ chooses the shop such that its effort is minimal in the sense that $e(x) = \inf_{y \in \calY} \{ c(x,y) - z(y) \}$, and $T$ is the OT map w.r.t.\ $c$ between $\mu$ and $\nu$. Then, it holds that $(e, z)$ are the optimal potentials,
\begin{equation*}
    \int_{\calX} e(x) \de{\mu(x)} + \int_{\calY} z(y) \de{\nu(y)} = \int_{\calX} c(x, T[x]) \de{\mu(x)}\,.
\end{equation*}
Knowing $c$ means understanding the preferences of consumers when choosing a shop. With this information, an analyst can advise a store on the necessary changes to increase its profitability.

In the above setting, we effectively have access to the full OT information \ref{enum:ot_cost}-\ref{enum:ot_pot}. Note that this might not always be possible. For example, tracking data might not be available because of privacy concerns and then only the optimal potentials $(e, z)$ can be observed through a survey. Vice versa, a survey might be too expensive or difficult to run and only the map $T$ can be obtained.

\section{Identifiability}\label{section:identi}

In the following, we investigate under which conditions and combinations of \ref{enum:ot_cost}-\ref{enum:ot_pot} $c$ is identifiable in $\calH$, i.e., when the (sub-)set of OT information uniquely determines $c$ in $\calH$. To this end, we consider classes of cost functions $\calH$ such that the OT w.r.t.\ any cost function from said class has certain structural properties that we can exploit. We will always assume that $c$ is induced by a function $h : \R^d \to [0, \infty)$, i.e., $c$ can be written as $c(x, y) = h(x - y)$. Then, identifiability of $c$ reduces to that of the inducing $h$. First, we observe that the knowledge of the OT cost is a necessary condition for achieving identifiability already in the univariate case and for convex function classes.

\subsection{Inverse Optimal Transport Knowing only the Optimal Transportation Plans is not Identifiable}

It has been noted in \cite{andrade2023sparsistency,InverseCarlier} that inverse OT is an ill-posed problem (in the sense of non-identifiability) when we aim to obtain the cost function based solely on the knowledge of the OT plans.

\begin{example}[See Section 2.2.\ in \cite{villani2003topics}]
	Let $\calH$ be the class of functions $c:\R\times \R\to [0,\infty) $ such that $c(x,y)=h(x-y)$ where $h:\R\to [0,\infty)$ is continuous and strictly convex. Then, for any pair $ (\mu, \nu) \in \calP(\R) \times \calP(\R)$ the monotone coupling $(F_\mu^{-1}, F_\nu^{-1})_{\#}(\restr{\lambda_1}{[0,1]})$ is optimal for every $c \in \calH$, where $F^{-1}_\mu$, $F^{-1}_\nu$ are the quantile functions of $\mu$, $\nu$ and $\restr{\lambda_1}{[0,1]}$ denotes the restriction of $\lambda_1$ to the unit interval. As a consequence, no cost function $c \in \calH$ is identifiable from the OT plans only.
\end{example}

\begin{example}
    It holds for any pair $(\mu, \nu) \in \calP(\R^d) \times \calP(\R^d)$ that if $\pi^* \in \Pi(\mu, \nu)$ is an OT plan w.r.t.\ $c$ it follows that $\pi^*$ is also optimal w.r.t.\ $c + k$ for any constant $k \in \R$. Hence, when observing only the OT plans, a cost function $c$ cannot be identifiable in $\calH$ if $c + k \in \calH$ for some $k \in \R$.
\end{example}

These elementary examples indicate that only knowledge of the OT plan (and also its marginals) is not sufficient to identify the cost function. However, the situation changes fundamentally if the optimal potentials are known: In the primal problem, the cost function $c$ influences the objective function, whereas in the dual problem, it affects the feasibility set. This leads to the possibility of obtaining the OT cost $\OT_c(\mu, \nu)$ through knowledge of the marginal probabilities $(\mu, \nu)$ and optimal potentials $(f,g)$ via the identity
\begin{equation}
	\label{fromPotentialsToCost}
	\OT_c(\mu, \nu) = \int f \de{\mu} + \int g\de{\nu}\,.
\end{equation}

\subsection{Convex Cost Functions} \label{subsec:convex_cost}

First, we consider cost functions that are induced by convex functions. Let $\calH$ be the set of strictly convex and differentiable functions $h : \R^d \to [0, \infty)$ that satisfy conditions \textup{(H2-3)} from Gangbo \& McCann \cite{GaMc}. That is,
\begin{itemize}
	\item[\textup{(H2)}] for height $r > 0$ and angle $\theta \in (0,\pi)$ it holds for all points $x \in \R^d$ that are far enough from the origin that there exists a cone
	\begin{equation*}
		\{ y \in \R^d : \norm{y-x} \norm{z} \cos(\theta / 2) \leq \inner{z}{y-x} \leq r \norm{z} \} \qquad \text{where } z \in \R^d \setminus \{0\}\,,
	\end{equation*}
	with vertex at $x$ on which $h$ attains its maximum at $x$;
	\item[\textup{(H3)}] $h(x) / \norm{x} \to \infty$ as $\norm{x} \to \infty$.
\end{itemize}
Furthermore, denote for $p \geq 1$ with $\calH_p$ the subset of $h \in \calH$ that are dominated by $\norm{\argdot}^p$, i.e., $h(x) \leq A \norm{x}^p + B$ for some constants $A,B > 0$ and all $x \in \R$. For cost functions induced by $\calH$, Gangbo and McCann showed that the OT between certain marginals is always induced by a unique OT map which has an explicit formula in terms of the inducing function and an optimal potential. Supposing that two cost functions have the same OT map, we can exploit this to derive conditions for identifiability. First, we treat the case where we know all input objects \ref{enum:ot_cost}-\ref{enum:ot_pot}, i.e., OT costs as well as OT plans and optimal potentials.

\begin{theorem}[Identifiability for strictly convex costs] \label{thm:convex_cost_pi_dual_sol_known}
Let $c_i: \R^d \times \R^d \to [0, \infty)$ be induced by $h_i \in \calH$, i.e., $c_i(x,y)=h_i(x-y)$, for $i \in \{1, 2\}$. \\
If there exists a non-empty set $$\{(\mu^{(v)},\nu^{(v)})\}_{v\in \calV}\subset \calP(\R^d) \times \calP(\R^d)$$
such that for every $v\in \calV$ it holds that $\mu^{(v)} \ll \lambda_d$ and there exist $\pi^{(v)}$ together with $(f^{(v)}, g^{(v)})$, optimal plan and optimal potentials for $\mu^{(v)}$ and $\nu^{(v)}$ w.r.t.\ $c_1$ and $c_2$ simultaneously, then $(\nabla h_1^*)(y)= (\nabla h_2^*)(y)$ for every
\begin{equation*}
    y \in \calX \defeq \bigcup_{v \in \calV} \{ \nabla f^{(v)}(x): x \in \opint(\supp\mu^{(v)}) \}\,.
\end{equation*}
As a consequence, for any open and connected set $\calX' \subseteq \R^d$ such that $\nabla h_1(\calX') \subseteq \calX$, it holds that $h_1 = h_2 + k$ on $\calX'$ for some constant $k \in \R$.

Moreover, if there exists $v \in \calV$ such that $\OT_{c_1}(\mu^{(v)}, \nu^{(v)}) = \OT_{c_2}(\mu^{(v)}, \nu^{(v)})$ together with $$\{ x - y : x \in \supp \mu^{(v)},\, y \in \supp \nu^{(v)}\} \subseteq \calX'\,,$$ we even have $k = 0$.
\end{theorem}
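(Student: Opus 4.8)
The key structural fact to exploit is Gangbo--McCann's result: for $h_i \in \calH$ and $\mu \ll \lambda_d$, the OT problem $\OT_{c_i}(\mu,\nu)$ admits a unique optimal transport map $T_i$, and it has the explicit form $T_i(x) = x - \nabla h_i^*(\nabla f^{(v)}_i(x))$ in terms of the inducing function and an optimal Kantorovich potential $f^{(v)}_i$ on the $\mu$-side. I would start by invoking this for each $v\in\calV$ and each $i\in\{1,2\}$. Since $\pi^{(v)}$ is simultaneously optimal for $c_1$ and $c_2$, and the optimal plan is unique and induced by a map, we get $T_1 = T_2$ $\mu^{(v)}$-a.e. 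Moreover, since $(f^{(v)},g^{(v)})$ is a common pair of optimal potentials, on $\opint(\supp\mu^{(v)})$ the potential $f^{(v)}$ is (locally Lipschitz, hence) differentiable $\lambda_d$-a.e., and $\mu^{(v)}\ll\lambda_d$ means a.e. statements transfer. Equating the two formulas for the transport map then yields
\[
\nabla h_1^*\bigl(\nabla f^{(v)}(x)\bigr) = \nabla h_2^*\bigl(\nabla f^{(v)}(x)\bigr)
\]
for $\mu^{(v)}$-a.e.\ $x \in \opint(\supp\mu^{(v)})$. The first obstacle is to upgrade this ``a.e.'' to ``every $y \in \calX$'': I would use that $h_i$ is strictly convex and differentiable, so $\nabla h_i^* = (\nabla h_i)^{-1}$ is continuous, and that $\nabla f^{(v)}$, while only defined a.e., has a range whose closure I can control; the cleanest route is to note $\nabla h_i^*$ is continuous on the (open) domain in question and that $\{x : \nabla f^{(v)}(x) = y\}$ having positive $\mu^{(v)}$-measure for the relevant $y$'s, combined with density of the a.e.-defined set, forces equality on all of $\calX$. (Here one may need to be slightly careful that $\calX$ as defined uses $\nabla f^{(v)}(x)$ for $x$ in the interior of the support — presumably the intended reading is the set of points where $f^{(v)}$ is differentiable, or one takes the closure; I would state this precisely.)

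Next, from $\nabla h_1^* = \nabla h_2^*$ on $\calX$ I would deduce the local statement about $h_1,h_2$ themselves. Given an open connected $\calX'$ with $\nabla h_1(\calX')\subseteq\calX$, for any $x\in\calX'$ set $y = \nabla h_1(x) \in \calX$; then $x = \nabla h_1^*(y) = \nabla h_2^*(y)$, which says $\nabla h_1(x) = y = \nabla h_2(x)$ (using $\nabla h_i^* = (\nabla h_i)^{-1}$ and injectivity of the strictly convex gradients). Hence $\nabla h_1 = \nabla h_2$ on $\calX'$, and since $\calX'$ is open and connected, $h_1 - h_2$ is constant there: $h_1 = h_2 + k$ on $\calX'$.

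Finally, for the constant $k$: pick the distinguished $v$ with $\OT_{c_1}(\mu^{(v)},\nu^{(v)}) = \OT_{c_2}(\mu^{(v)},\nu^{(v)})$ and $\{x-y : x\in\supp\mu^{(v)}, y\in\supp\nu^{(v)}\}\subseteq\calX'$. The plan is to evaluate both optimal costs on the \emph{same} optimal plan $\pi^{(v)}$: since $\pi^{(v)}$ is optimal for both,
\[
\OT_{c_1}(\mu^{(v)},\nu^{(v)}) = \int h_1(x-y)\de{\pi^{(v)}(x,y)}, \qquad
\OT_{c_2}(\mu^{(v)},\nu^{(v)}) = \int h_2(x-y)\de{\pi^{(v)}(x,y)}.
\]
The support of $\pi^{(v)}$ is contained in $\supp\mu^{(v)}\times\supp\nu^{(v)}$, so $x-y\in\calX'$ on $\supp\pi^{(v)}$, where $h_1(x-y) = h_2(x-y) + k$. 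Subtracting the two cost identities and using that the left-hand sides agree gives $0 = \int k \de{\pi^{(v)}} = k$. The main technical obstacle is the first one — rigorously passing from the $\mu^{(v)}$-a.e.\ identity of the gradient maps to a pointwise identity on the full set $\calX$, which requires carefully tracking where the potentials are differentiable and exploiting continuity of $\nabla h_i^*$; everything after that is essentially bookkeeping with convex conjugates and connectedness.
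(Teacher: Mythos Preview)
Your approach is essentially identical to the paper's: invoke Gangbo--McCann to write $T_i^{(v)}(x)=x-\nabla h_i^*(\nabla f^{(v)}(x))$, use uniqueness of the optimal plan to get $T_1^{(v)}=T_2^{(v)}$ $\mu^{(v)}$-a.e., deduce $\nabla h_1^*=\nabla h_2^*$ on $\calX$, then apply $\nabla h_2^*\circ\nabla h_1=\id$ on $\calX'$ to conclude $\nabla h_1=\nabla h_2$ there, and finally identify $k$ via the coinciding optimal values. The only cosmetic difference is in the last step: you integrate $h_1-h_2=k$ directly against the shared optimal plan $\pi^{(v)}$, while the paper writes $\OT_{c_1}=\OT_{c_2+k}=\OT_{c_2}+k$; these are the same computation.

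Regarding the ``first obstacle'' you flag (upgrading the $\mu^{(v)}$-a.e.\ identity to all of $\calX$): the paper does not address this either --- it simply passes from the a.e.\ statement to the pointwise one on $\calX$ without comment. So your caution here is warranted but is not a divergence from the paper; if anything you are being more scrupulous than the original. In practice the set $\calX$ should be read as the range of $\nabla f^{(v)}$ over those interior points where the gradient exists (a full-measure set), and the subsequent argument only uses $\calX$ through the hypothesis $\nabla h_1(\calX')\subseteq\calX$, so no genuine upgrade is needed for the conclusions that follow.
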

\begin{proof}
	For $v \in \calV$, \cite[Theorem~1.2]{GaMc} asserts that we have the following OT maps for $\mu^{(v)}$ and $\nu^{(v)}$ w.r.t.\ the cost $c_i$, $i \in \{1, 2\}$,
	\begin{equation*}
		T^{(v)}_i(x) = x - (\nabla h_i^*)[\nabla f^{(v)}(x)]\, .
	\end{equation*}
	In particular, they are unique $\mu^{(v)}$-a.s.\ and induce the unique OT plan. Hence, by assumption we obtain $\mu^{(v)}$-a.s.\ that $T^{(v)}_1=T^{(v)}_2$. Therefore, for $\mu^{(v)}$-a.e.\ $x$ it holds
	\begin{equation*}
		(\nabla h_1^*)[\nabla f^{(v)}(x)]= (\nabla h_2^*)[\nabla f^{(v)}(x)],
	\end{equation*}
	so that $ (\nabla h_1^*)(y)= (\nabla h_2^*)(y)$, for all
	\begin{equation*}
		y \in \{ \nabla f^{(v)}(x): x \in \opint(\supp \mu^{(v)})\}\,.
	\end{equation*}
	Now, suppose that $\nabla h_1(\calX')\subseteq \calX$ for some open and connected $\calX' \subseteq \R^d$, then for any $x\in \calX'$ it holds $\nabla h_1(x)\in \calX$ and
	\begin{equation*}
		x=(\nabla h_1^*)[(\nabla h_1)(x)]= (\nabla h_2^*)[(\nabla h_1)(x)]\,.
	\end{equation*}
	Notably, this implies that $\nabla h_2^* \circ \nabla h_1 = \id$ on $\calX'$ which can only happen if $(\nabla h_2^*)^{-1}=\nabla h_1$. As it holds that $(\nabla h_2^*)^{-1}=\nabla h_2$, we see that $\nabla h_2 = \nabla h_1$ on $\calX'$ and conclude $h_1 = h_2 + k$ for some $k \in \R$. If we further assume that $\{ x - y : x \in \supp \mu^{(v)},\, y \in \supp \nu^{(v)}\} \subseteq \calX'$ for some $v \in \calV$, we have that $c_1 = c_2 + k$ on $\supp \mu^{(v)} \times \supp \nu^{(v)}$. Hence, it follows that
	\begin{equation*}
		\OT_{c_2}(\mu^{(v)}, \nu^{(v)}) = \OT_{c_1}(\mu^{(v)}, \nu^{(v)}) = \OT_{c_2+k}(\mu^{(v)}, \nu^{(v)}) = \OT_{c_2}(\mu^{(v)}, \nu^{(v)}) + k\,,
	\end{equation*}
	which implies $k = 0$.
\end{proof}

\begin{remark}
    Note that in \autoref{thm:convex_cost_pi_dual_sol_known} only the first optimal potential $f^{(v)}$ needs to be known explicitly and not the second one $g^{(v)}$. This is also true for \autoref{thm:concave_cost_dual_sol_known}. As $f^{(v)} = \inf_{y \in \R^d} \{ c(\argdot,y) -g^{(v)}(y) \}$ is the $c$-transform of $g^{(v)}$, we use $g^{(v)}$ only implicitly. Note that as $c$ is unknown, we cannot use this relationship explicitly to obtain $f^{(v)}$ from $g^{(v)}$ (and vice versa).
\end{remark}

The approach used in \autoref{thm:convex_cost_pi_dual_sol_known} always requires knowledge of one optimal potential as the OT map depends on it. To circumvent this, we give a criterion where the optimal potentials for certain marginals do not depend on the specific underlying cost function. Namely, we assume that the marginals are dense in an open set. However, due to the unboundedness of the cost functions induced by $\calH$, open sets of $\calP(\R^d)$ equipped with the usual narrow topology (weak convergence) are not well suited. To this end, we take the marginals to belong to the $p$-Wasserstein space $(\calP_p(\R^d), \W_p)$ for some $p \geq 1$. This way, we make sure that the OT cost between such marginals is always finite w.r.t. cost functions induced by $\calH_p$.

\begin{lemma} \label{thm:same_dual_sol}
Let $c_1,\, c_2 : \R^d \times \R^d \to [0, \infty)$ be continuous cost functions with constants $A,B > 0$ such that
\begin{equation*}
\max[ c_1(x,y), c_2(x,y) ] \leq A \norm{x-y}^p + B \qquad \text{for all } x,\, y \in \R^d\,.
\end{equation*}
Assume that there exist a dense subset $\{\mu^{(v)}\}_{v\in \calV}$ of an non-empty open set $\mathcal{U} \subseteq \calP_p(\R^d)$ and $\nu \in \calP_p(\R^d)$ such that
\begin{equation*}
	\OT_{c_1}(\mu^{(v)},\nu) = \OT_{c_2}(\mu^{(v)},\nu) \qquad \text{for all $ v \in \calV $.}
\end{equation*}
Then it holds for any $v \in \calV$ such that the optimal potentials $(f_i^{(v)}, g_i^{(v)})$ w.r.t.\ $c_i$, $i \in \{1,2\}$, are unique (up to additive constants), that they are equal and thus optimal for $c_1$ and $c_2$ simultaneously.
\end{lemma}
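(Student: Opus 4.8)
The plan is to exploit the fact that the map $\mu \mapsto \OT_{c_i}(\mu, \nu)$ is continuous on $\calP_p(\R^d)$, together with the concavity of this map in $\mu$ (which follows from the dual formulation: it is an infimum of affine functionals $\mu \mapsto \int f \de\mu + \int g \de\nu$ over the fixed feasible set $\Phi_{c_i}(\argdot, \nu)$, wait — actually the feasible set depends only on $c_i$ and not on $\mu$, so $\mu \mapsto \OT_{c_i}(\mu,\nu)$ is an infimum of affine functions, hence concave and continuous). Since $\OT_{c_1}$ and $\OT_{c_2}$ agree on the dense set $\{\mu^{(v)}\}_{v \in \calV}$, and both are continuous, they must agree on all of $\calU$. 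This is the first and essentially routine step.

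Next I would fix some $v \in \calV$ for which the optimal potentials $(f_i^{(v)}, g_i^{(v)})$ are unique up to additive constants, and write $\mu \defeq \mu^{(v)}$. Normalize the potentials, say by fixing $\int f_i^{(v)} \de\mu = $ some value, or better, by picking representatives. The key idea is a first-variation/subdifferential argument: since $\mu$ lies in the open set $\calU$, for any signed measure $\rho$ with $\rho(\R^d) = 0$ and small enough $\|\rho\|$ the perturbation $\mu + t\rho$ stays in $\calU$ for $t$ in a neighborhood of $0$, so the concave function $t \mapsto \OT_{c_i}(\mu + t\rho, \nu)$ is defined near $t=0$. I would identify its superdifferential at $t = 0$: by standard OT duality (envelope theorem / Danskin-type result for concave minimization over a fixed set), the superdifferential of $\mu \mapsto \OT_{c_i}(\mu,\nu)$ at $\mu$ is exactly the set of first components $f_i^{(v)}$ of optimal potentials (as elements of the appropriate dual space, paired against $\rho$ via $\int f \de\rho$). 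Under the uniqueness assumption this superdifferential is a singleton $\{f_i^{(v)}\}$ (modulo constants, which are killed by $\rho(\R^d)=0$), so $\mu \mapsto \OT_{c_i}(\mu,\nu)$ is Gateaux-differentiable at $\mu$ with derivative $f_i^{(v)}$. Since the two functions $\OT_{c_1}(\argdot,\nu)$ and $\OT_{c_2}(\argdot,\nu)$ coincide on the open set $\calU$, their Gateaux derivatives at $\mu$ coincide: $\int f_1^{(v)} \de\rho = \int f_2^{(v)} \de\rho$ for all admissible $\rho$, which forces $f_1^{(v)} = f_2^{(v)}$ $\mu$-a.e.\ up to an additive constant. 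Adjusting the constant and taking $c$-transforms then gives $g_1^{(v)} = g_2^{(v)}$ $\nu$-a.e., and the pair is simultaneously optimal for $c_1$ and $c_2$ because it is feasible for both (the inequality $f(x) + g(y) \le c_i(x,y)$ can be arranged to hold everywhere after choosing the right representatives) and attains the common optimal value.

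The main obstacle is making the first-variation argument rigorous in the infinite-dimensional Wasserstein setting: one must justify that "openness in $(\calP_p, \W_p)$" supplies enough admissible perturbation directions $\rho$ to separate $L^1(\mu)$ functions, and one must correctly identify the superdifferential of the OT value with the set of optimal Kantorovich potentials — the cleanest route is probably to avoid differentiating in $\W_p$ directly and instead argue: if $f_1^{(v)} \ne f_2^{(v)}$ modulo constants, then (since each $f_i^{(v)}$ is the unique maximizer of the concave functional $\int f \de\mu + \int (f^{c_i}) \de\nu$ type expression, or by strict separation) there is a perturbation $\mu_t = (1-t)\mu + t\tilde\mu$ with $\tilde\mu \in \calU$ along which the two values $\OT_{c_1}(\mu_t,\nu)$ and $\OT_{c_2}(\mu_t,\nu)$ have different one-sided derivatives at $t=0^+$, contradicting their equality on $\calU$. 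Concretely, the one-sided derivative of $\OT_{c_i}(\mu_t, \nu)$ at $t = 0^+$ equals $\inf_{f \in \text{(optimal first potentials)}} \int f \de(\tilde\mu - \mu)$, which under uniqueness is just $\int f_i^{(v)} \de(\tilde\mu - \mu)$; choosing $\tilde\mu$ so that $\int (f_1^{(v)} - f_2^{(v)}) \de(\tilde\mu - \mu) \ne 0$ — possible since $f_1^{(v)} - f_2^{(v)}$ is non-constant $\mu$-a.e.\ and $\calU$ open gives room to move $\mu$ in the direction that pairs non-trivially against it — yields the contradiction. Verifying this envelope formula (that the directional derivative of the OT value under a mixture perturbation of one marginal is the infimum over optimal potentials of their integral against the perturbation) is the technical heart; it follows from concavity plus the fact that optimal potentials are exactly the supergradients, but the domination hypothesis $\max[c_1, c_2] \le A\|x-y\|^p + B$ is what guarantees finiteness and the requisite compactness/semicontinuity to run this argument.
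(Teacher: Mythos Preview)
Your approach is essentially the paper's: extend the equality $\OT_{c_1}=\OT_{c_2}$ from the dense set to all of $\calU$ by continuity, then compute the first variation of $\mu \mapsto \OT_{c_i}(\mu,\nu)$ at the fixed $\mu^{(w)}$ along linear perturbations, identify it (under uniqueness of potentials) with $\rho \mapsto \int f_i^{(w)}\de\rho$, and conclude $f_1^{(w)}=f_2^{(w)}$ by varying $\rho$. The paper perturbs additively by $\mu^{(w)}+t\,\xi_\phi$ with $\phi\in\calC_0(\R^d)$ and cites \cite[Proposition~4.8]{Santambrogio2017} for the derivative formula, which is exactly the envelope step you flag as the technical heart.

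Two small corrections. First, the dual \eqref{eq:dual} is a \emph{supremum} of affine functionals of $\mu$, so $\mu\mapsto\OT_{c_i}(\mu,\nu)$ is convex, not concave; replace ``superdifferential'' by ``subdifferential'' throughout (the argument itself is unaffected). Second, your step ``taking $c$-transforms then gives $g_1^{(v)}=g_2^{(v)}$'' does not work as written: the $c_1$-transform and the $c_2$-transform of the same $f$ have no reason to coincide, and the assertion that the resulting pair is feasible for \emph{both} costs is precisely what is at issue. The paper's own proof in fact stops at $f_1^{(w)}=f_2^{(w)}$ and does not treat the second potential either; since the downstream applications use only the first potential (cf.\ the Remark after \autoref{thm:convex_cost_pi_dual_sol_known}), this is harmless there, but be aware that the lemma as stated asserts slightly more than either argument actually delivers.
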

\begin{proof}
Let $w \in \calV$ be such that the optimal potentials are unique. Denote with $\calC_{0}(\R^d)$ the set of continuous and compactly supported functions that integrate (w.r.t.\ $\lambda_d$) to $0$. For $\phi \in \calC_{0}(\R^d)$, let $\xi_\phi$ be the measure on $\R^d$ with Lebesgue density $\phi$. Since $\calU$ is open and $\xi_\phi$ compactly supported, there exists a $t_0 > 0$ small enough such that $\mu^{(w)}_t \defeq \mu^{(w)} + t \xi_\phi \in \mathcal{U}$ for all $t \in [0, t_0] $. As $\{\mu^{(v)}\}_{v\in \calV}$ is dense, for each $t \in [0, t_0]$ there exists a net $\{ \mu_{n} \}_{n\in\N} \subset \{\mu^{(v)}\}_{v\in\calV}$ such that $\W_p(\mu_n, \mu^{(w)}_t) \to 0$ as $n \to \infty$. By the continuity of the OT functional \cite[Theorem~5.20]{villani2008optimal}, it follows that the function
$$ \Psi : [0, t_0] \to \R\,, \qquad t \mapsto \OT_{c_1}(\mu^{(w)}_t,\nu) - \OT_{c_2}(\mu^{(w)}_t,\nu)\,, $$
is constant equal to $0$. \cite[Proposition~4.8]{Santambrogio2017} implies that the right derivative (first variation) of $\Psi$ at $0$ is
$\int (f_{1}^{(w)}-f_{2}^{(w)}) \phi \de \lambda_d \, . $ Hence,
$$ 0 = \int (f_{1}^{(w)}-f_{2}^{(w)}) \phi \de{\lambda_d} \qquad \text{for all } \phi \in \calC_0(\R^d)\,.$$
As $\calC_0(\R^d)$ is dense w.r.t.\ the weak topology the proof is complete.
\end{proof}

We know, from \eqref{fromPotentialsToCost}, that knowledge of the optimal potentials determines the value of the OT cost. \autoref{thm:same_dual_sol} shows the reverse implication. That is, from sufficient knowledge of transportation costs, the transportation potentials are automatically determined. The main assumption of \autoref{thm:same_dual_sol} is that the OT potentials are unique (up to constant shift). Using the sufficient conditions provided in \cite[Corollary~2.7]{del2021central} for convex cost functions, we obtain the following result. Note that more general uniqueness conditions for optimal potentials are given in \cite{staudt2022uniqueness}.

\begin{corollary}\label{thm:convex_cost_same_dual_sol}
Let $c_i : \R^d \times \R^d \to [0, \infty)$ be induced by $h_i \in \calH_p$, i.e., $c_i(x,y)=h_i(x-y)$, for $i \in \{1, 2\}$.
Assume that there exists a dense subset $\{(\mu^{(v)}, \nu^{(v)}) \}_{v\in \calV}$ of an non-empty open set of $\calP_p(\R^d) \times \calP_p(\R^d)$ such that
\begin{equation*}
	\OT_{c_1}(\mu^{(v)},\nu^{(v)}) = \OT_{c_2}(\mu^{(v)},\nu^{(v)}) \qquad \text{for all } \ v \in \calV\,.
\end{equation*}
Recalling the definition of negligible boundary from \autoref{section:notation}, define the set
\begin{equation} \label{eq:def_calv_lambda_d}
	\calV(\lambda_d) \defeq \{ v \in \calV \mid \mu^{(v)} \ll \lambda_d \text{ has negligible boundary and $\supp \mu^{(v)}$ is connected}\}\,.
\end{equation}
Then, it follows for all $v \in \calV(\lambda_d)$ that $\calS_{c_1}(\mu^{(v)}, \nu^{(v)}) = \calS_{c_2}(\mu^{(v)}, \nu^{(v)})$ and the dual solution $(f^{(v)}, g^{(v)}) \in \calS_{c_1}(\mu^{(v)}, \nu^{(v)})$ is unique (up to additive constants).
\end{corollary}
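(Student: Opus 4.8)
The plan is to fix an arbitrary $v_0 \in \calV(\lambda_d)$, verify that the pair $(\mu^{(v_0)}, \nu^{(v_0)})$ meets the hypotheses of \autoref{thm:same_dual_sol}, and supply the one remaining ingredient of that lemma --- uniqueness of the optimal potentials up to additive constants --- by invoking \cite[Corollary~2.7]{del2021central}. Admissibility of the costs for \autoref{thm:same_dual_sol} is immediate: each $h_i \in \calH_p$ is continuous with $h_i(x) \le A_i\norm{x}^p + B_i$, so $c_i(x,y) = h_i(x-y)$ is continuous and $\max[c_1(x,y), c_2(x,y)] \le A\norm{x-y}^p + B$ with $A \defeq \max(A_1,A_2)$, $B \defeq \max(B_1,B_2)$.

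The first real step is to reduce the joint density assumption to the single--moving--marginal setting that \autoref{thm:same_dual_sol} requires. Since the open set from the hypothesis is open in the product topology and contains $(\mu^{(v_0)}, \nu^{(v_0)})$, it contains a product neighborhood $\calU_1 \times \calU_2$ with $\calU_1, \calU_2 \subseteq \calP_p(\R^d)$ open, $\mu^{(v_0)} \in \calU_1$ and $\nu^{(v_0)} \in \calU_2$. I would then show that $\OT_{c_1}(\mu, \nu^{(v_0)}) = \OT_{c_2}(\mu, \nu^{(v_0)})$ for every $\mu \in \calU_1$: for such a $\mu$ the pair $(\mu, \nu^{(v_0)})$ lies in the hypothesized open set, so by density there is a sequence $(\mu^{(v_n)}, \nu^{(v_n)})$ converging to it in $\W_p \times \W_p$; continuity of the OT functional \cite[Theorem~5.20]{villani2008optimal} gives $\OT_{c_i}(\mu^{(v_n)}, \nu^{(v_n)}) \to \OT_{c_i}(\mu, \nu^{(v_0)})$ for $i \in \{1,2\}$, and the assumed equality of the two sides along the sequence passes to the limit. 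This shows that $\OT_{c_1}(\cdot, \nu^{(v_0)})$ and $\OT_{c_2}(\cdot, \nu^{(v_0)})$ agree on all of $\calU_1$, hence in particular on a dense subset of the open set $\calU_1 \subseteq \calP_p(\R^d)$, so all hypotheses of \autoref{thm:same_dual_sol} are in force with $\nu \defeq \nu^{(v_0)}$ and with the dense subset taken to be $\calU_1$ itself (which contains $\mu^{(v_0)}$).

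Next I would invoke \cite[Corollary~2.7]{del2021central}: because each $h_i \in \calH_p$ is strictly convex and differentiable (so $c_i$ satisfies the structural assumptions of that result) and because $v_0 \in \calV(\lambda_d)$ guarantees that $\mu^{(v_0)} \ll \lambda_d$ has negligible boundary and connected support, the optimal potentials $(f_i^{(v_0)}, g_i^{(v_0)})$ for $\OT_{c_i}(\mu^{(v_0)}, \nu^{(v_0)})$ are unique up to additive constants, for $i \in \{1,2\}$. \autoref{thm:same_dual_sol} then produces a common pair $(f^{(v_0)}, g^{(v_0)})$ optimal for $c_1$ and $c_2$ simultaneously, and combining this with the uniqueness just stated gives $\calS_{c_i}(\mu^{(v_0)}, \nu^{(v_0)}) = \{ (f^{(v_0)} + k,\, g^{(v_0)} - k) : k \in \R \}$ for $i \in \{1,2\}$. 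Hence $\calS_{c_1}(\mu^{(v_0)}, \nu^{(v_0)}) = \calS_{c_2}(\mu^{(v_0)}, \nu^{(v_0)})$ and this set is a single orbit under constant shifts, i.e., the dual solution is unique up to additive constants. Since $v_0 \in \calV(\lambda_d)$ was arbitrary, the proof is complete.

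I expect the bookkeeping in the reduction step to be the only delicate point: one must make sure that ``dense in a product open set'' genuinely downgrades to the single--moving--marginal density hypothesis of \autoref{thm:same_dual_sol}, which hinges on joint continuity of $(\mu,\nu) \mapsto \OT_{c_i}(\mu,\nu)$ with respect to $\W_p \times \W_p$. On the external side, the one thing worth double-checking is that the precise hypotheses of \cite[Corollary~2.7]{del2021central} are indeed implied by membership in $\calH_p$ together with the defining conditions of $\calV(\lambda_d)$, as anticipated in the discussion preceding the statement.
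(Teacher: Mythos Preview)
Your proposal is correct and follows the same approach as the paper: invoke \cite[Corollary~2.7]{del2021central} for uniqueness of potentials on $\calV(\lambda_d)$, then apply \autoref{thm:same_dual_sol}. The paper's proof is two sentences and leaves implicit the reduction from the product-space density hypothesis to the single-moving-marginal hypothesis of \autoref{thm:same_dual_sol}; you make this reduction explicit via the product neighborhood $\calU_1 \times \calU_2$ and the joint $\W_p$-continuity of $\OT_{c_i}$, which is exactly the right way to bridge the gap.
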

\begin{proof}
It follows by \cite[Corollary~2.7]{del2021central} that for $v \in \calV(\lambda_d)$ the optimal potentials $(f_i^{(v)}, g_i^{(v)}) \in \calS_{c_i}(\mu^{(v)}, \nu^{(v)})$, $i \in \{1, 2\}$, are unique (up to additive constants). An application of \autoref{thm:same_dual_sol} yields the assertion.
\end{proof}

Under the assumptions of \autoref{thm:convex_cost_same_dual_sol}, we can get rid of the condition in \autoref{thm:convex_cost_pi_dual_sol_known} involving the knowledge of the optimal potentials. As a consequence, we obtain an identifiability result where only the OT plans and the OT costs for an open set of marginals need to be known.

\begin{corollary}
\label{thm:convex_cost_pi_known}
	Let the setting of \autoref{thm:convex_cost_same_dual_sol} hold. If for every $v\in \calV(\lambda_d)$ the (unique) OT plan is the same for both costs $c_1$ and $c_2$, then $(\nabla h_1^*)(y)= (\nabla h_2^*)(y)$ for every
	\begin{equation*}
		y \in \calX \defeq \bigcup_{v \in \calV(\lambda_d)} \{ \nabla f^{(v)}(x): x \in \opint(\supp\mu^{(v)}) \}\,.
	\end{equation*}
	In particular, for every open and connected set $\calX' \subseteq \R^d$ with $\nabla h_1(\calX') \subseteq \calX$ there exists $k \in \R$ such that $h_1 = h_2 + k$ on $\calX'$. Moreover, if there exists $v \in \calV$ such that
	$$\{ x - y : x \in \supp \mu^{(v)},\, y \in \supp \nu^{(v)}\} \subseteq \calX'\,,$$
	we even have $k = 0$.
\end{corollary}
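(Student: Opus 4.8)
The plan is to deduce this corollary directly from \autoref{thm:convex_cost_pi_dual_sol_known}, applied with the index set there replaced by $\calV(\lambda_d)$, where the inputs required by that theorem (a common optimal plan and common optimal potentials for $c_1$ and $c_2$) are supplied by \autoref{thm:convex_cost_same_dual_sol} together with the extra hypothesis of the present statement. Concretely, fix $v \in \calV(\lambda_d)$. By the definition \eqref{eq:def_calv_lambda_d} we have $\mu^{(v)} \ll \lambda_d$, and by \autoref{thm:convex_cost_same_dual_sol} it holds that $\calS_{c_1}(\mu^{(v)}, \nu^{(v)}) = \calS_{c_2}(\mu^{(v)}, \nu^{(v)})$ with a common element $(f^{(v)}, g^{(v)})$ that is unique up to additive constants; in particular $(f^{(v)}, g^{(v)})$ is simultaneously optimal for $c_1$ and $c_2$. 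Since $\mu^{(v)} \ll \lambda_d$ and $h_i \in \calH_p$, \cite[Theorem~1.2]{GaMc} guarantees that the OT plan between $\mu^{(v)}$ and $\nu^{(v)}$ w.r.t.\ $c_i$ is unique, so the phrase ``the (unique) OT plan'' is justified, and by hypothesis this plan $\pi^{(v)}$ is the same for both costs. Hence all hypotheses of \autoref{thm:convex_cost_pi_dual_sol_known} are satisfied with $\calV$ there taken to be $\calV(\lambda_d)$.

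Applying \autoref{thm:convex_cost_pi_dual_sol_known} then immediately gives $(\nabla h_1^*)(y) = (\nabla h_2^*)(y)$ for every $y \in \calX = \bigcup_{v \in \calV(\lambda_d)} \{ \nabla f^{(v)}(x) : x \in \opint(\supp \mu^{(v)}) \}$, and consequently, for any open connected $\calX' \subseteq \R^d$ with $\nabla h_1(\calX') \subseteq \calX$, the identity $h_1 = h_2 + k$ on $\calX'$ for some constant $k \in \R$, by the same argument as in that theorem: composing $\nabla h_2^*$ with $\nabla h_1$ on $\calX'$ and using $(\nabla h_i^*)^{-1} = \nabla h_i$ forces $\nabla h_1 = \nabla h_2$ on $\calX'$.

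For the final claim $k = 0$, observe that the density hypothesis of \autoref{thm:convex_cost_same_dual_sol} yields $\OT_{c_1}(\mu^{(v)}, \nu^{(v)}) = \OT_{c_2}(\mu^{(v)}, \nu^{(v)})$ for \emph{every} $v \in \calV$, not merely for $v \in \calV(\lambda_d)$. Thus if some $v \in \calV$ satisfies $\{ x - y : x \in \supp \mu^{(v)},\, y \in \supp \nu^{(v)} \} \subseteq \calX'$, then $c_1 = c_2 + k$ on $\supp \mu^{(v)} \times \supp \nu^{(v)}$, so $\OT_{c_1}(\mu^{(v)}, \nu^{(v)}) = \OT_{c_2 + k}(\mu^{(v)}, \nu^{(v)}) = \OT_{c_2}(\mu^{(v)}, \nu^{(v)}) + k$, which forces $k = 0$ --- exactly the concluding computation in the proof of \autoref{thm:convex_cost_pi_dual_sol_known}.

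Since the argument is a reduction to statements already established, there is no substantively hard step; the only point requiring care is the bookkeeping --- making sure that the ``simultaneously optimal potentials'' demanded as an input by \autoref{thm:convex_cost_pi_dual_sol_known} are \emph{produced} by \autoref{thm:convex_cost_same_dual_sol} rather than assumed, that the union defining $\calX$ is taken over $\calV(\lambda_d)$, and that the $k=0$ conclusion may legitimately invoke any $v \in \calV$ because the equality of optimal costs is available there.
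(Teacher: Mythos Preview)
Your proof is correct and follows the same approach as the paper, which simply writes ``Follows from \autoref{thm:convex_cost_pi_dual_sol_known} applied to the subset $\{(\mu^{(v)}, \nu^{(v)})\}_{v \in \calV(\lambda_d)}$.'' Your version is in fact slightly more careful about the final $k=0$ claim, correctly noting that the equality of optimal costs is available for every $v \in \calV$ (not just $\calV(\lambda_d)$), which is needed since the corollary allows any $v \in \calV$ in that step.
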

\begin{proof}
	Follows from \autoref{thm:convex_cost_pi_dual_sol_known} applied to the subset $\{(\mu^{(v)}, \nu^{(v)})\}_{v \in \calV(\lambda_d)}$.
\end{proof}
\autoref{thm:convex_cost_pi_known} provides sufficient conditions for the identifiability of the cost when the OPs are known. This is summarized in the following result.

\begin{corollary}
	Let $c_i: \R^d \times \R^d \to [0, \infty)$ be induced by $h_i \in \calH_p$, i.e., $c_i(x,y)=h_i(x-y)$, for $i \in \{1, 2\}$. \\
    Assume that there exist transport plans $\{\pi^{(v)}\}_{v\in \calV}$ such that the set of marginals $\{(\mu^{(v)},\nu^{(v)})\}_{v\in \calV}$ is a dense subset of a non-empty open set of $\calP_p(\R^d) \times \calP_p(\R^d)$ and for every $v \in \calV$,
    \begin{equation*}
			\OT_{c_1}(\mu^{(v)},\nu^{(v)}) = \int c_1 \de{\pi}^{(v)}= \int c_2 \de{\pi}^{(v)}= \OT_{c_2}(\mu^{(v)},\nu^{(v)})\,.
	\end{equation*}
    With $\calV(\lambda_d)$ as in \eqref{eq:def_calv_lambda_d}, it follows that $(\nabla h_1^*)(y)= (\nabla h_2^*)(y)$ for all
	\begin{equation*}
		y \in \calX \defeq \bigcup_{v \in \calV(\lambda_d)} \{ \nabla f^{(v)}(x): x \in \opint(\supp\mu^{(v)}) \}\,.
	\end{equation*}
    In particular, let $\calX' \subseteq \R^d$ be an open and connected set with $\nabla h_1(\calX') \subseteq \calX$, then it holds that $h_1 = h_2 + k$ on $\calX'$ for some constant $k \in \R$. Moreover, if there exists $v \in \calV$ such that $$\{ x - y : x \in \supp \mu^{(v)},\, y \in \supp \nu^{(v)}\} \subseteq \calX'\,,$$ we even have $k = 0$.
\end{corollary}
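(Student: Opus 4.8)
The plan is to obtain this statement by concatenating \autoref{thm:convex_cost_same_dual_sol} and \autoref{thm:convex_cost_pi_known}; the only work is to check that the hypotheses of each are met under the present assumptions. First I would extract from the chain of equalities in the hypothesis the weaker fact that $\OT_{c_1}(\mu^{(v)},\nu^{(v)}) = \OT_{c_2}(\mu^{(v)},\nu^{(v)})$ for every $v \in \calV$. Together with $h_i \in \calH_p$ and the density of $\{(\mu^{(v)},\nu^{(v)})\}_{v\in\calV}$ in a non-empty open subset of $\calP_p(\R^d)\times\calP_p(\R^d)$, this is precisely the setting of \autoref{thm:convex_cost_same_dual_sol}. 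Applying that corollary yields, for every $v \in \calV(\lambda_d)$, that $\calS_{c_1}(\mu^{(v)},\nu^{(v)}) = \calS_{c_2}(\mu^{(v)},\nu^{(v)})$ and that the optimal potentials $(f^{(v)},g^{(v)})$ are unique up to additive constants; in particular $\nabla f^{(v)}$, and hence the set $\calX$, is unambiguously defined.

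Next I would verify the remaining hypothesis of \autoref{thm:convex_cost_pi_known}, namely that for every $v \in \calV(\lambda_d)$ the unique OT plan is the same for $c_1$ and $c_2$. For such $v$ we have $\mu^{(v)} \ll \lambda_d$, so \cite[Theorem~1.2]{GaMc} guarantees that the OT plan between $\mu^{(v)}$ and $\nu^{(v)}$ w.r.t.\ $c_i$ is unique and induced by the map $x \mapsto x - (\nabla h_i^*)[\nabla f^{(v)}(x)]$, for $i\in\{1,2\}$. On the other hand, the assumed equalities say exactly that $\int c_1 \de{\pi^{(v)}} = \OT_{c_1}(\mu^{(v)},\nu^{(v)})$ and $\int c_2 \de{\pi^{(v)}} = \OT_{c_2}(\mu^{(v)},\nu^{(v)})$, i.e.\ the externally supplied plan $\pi^{(v)}$ is optimal for $c_1$ and for $c_2$ simultaneously; by the uniqueness just invoked, $\pi^{(v)}$ therefore coincides with the unique OT plan of $c_1$ and with that of $c_2$, so these two plans agree.

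With both hypotheses established, I would conclude by invoking \autoref{thm:convex_cost_pi_known}: it gives $(\nabla h_1^*)(y) = (\nabla h_2^*)(y)$ for all $y \in \calX$, the equality $h_1 = h_2 + k$ on any open connected $\calX' \subseteq \R^d$ with $\nabla h_1(\calX') \subseteq \calX$, and $k = 0$ whenever some $v \in \calV$ satisfies $\{x-y : x \in \supp\mu^{(v)},\, y \in \supp\nu^{(v)}\} \subseteq \calX'$. I do not expect a genuine obstacle here; the single delicate point is the argument of the second paragraph, where one must be careful that the Gangbo--McCann uniqueness of the OT plan (legitimate precisely because $\mu^{(v)} \ll \lambda_d$ for $v \in \calV(\lambda_d)$) is exactly what allows the given plan $\pi^{(v)}$ to be identified with the ``unique OT plan'' referenced inside \autoref{thm:convex_cost_pi_known}. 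Once that identification is made, the rest is a routine assembly of the two corollaries.
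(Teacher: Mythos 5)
Your proposal is correct and follows exactly the route the paper intends: the corollary is stated as a direct summary of \autoref{thm:convex_cost_pi_known}, where the only thing to check is that the jointly optimal plan $\pi^{(v)}$ forces the unique (Gangbo--McCann) OT plans of $c_1$ and $c_2$ to coincide for $v \in \calV(\lambda_d)$, which is precisely your second paragraph. No gaps.
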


Finally, we give a condition that reduces knowledge about the OT plan to the unique and shared optimal potentials of \autoref{thm:convex_cost_same_dual_sol}. In particular, this yields an identifiability result that only requires the marginals (and the specific choice of the class of cost functions).

\begin{theorem}\label{thm:convex_cost_pi_unknown}
	Let the setting of \autoref{thm:convex_cost_same_dual_sol} hold. If for every $v\in \calV(\lambda_d)$ the OT plan between $\mu^{(v)}$ and $\nu^{(v)}$ for the cost
	\begin{equation*}
		c_{\calV}(x,y)=\sup_{v\in \calV(\lambda_d)} \{ f^{(v)}(x)+ g^{(v)}(y) \}
	\end{equation*} is unique, then, for every $v\in \calV(\lambda_d)$, the unique OT map is the same for $c_1$ and $c_2$. Moreover, it holds that $(\nabla h_1^*)(y)= (\nabla h_2^*)(y)$ for all
	\begin{equation*}
		y \in \calX \defeq \bigcup_{v \in \calV(\lambda_d)} \{ \nabla f^{(v)}(x): x \in \opint(\supp\mu^{(v)}) \}\,.
	\end{equation*}
	In particular, for any open and connected set $\calX' \subseteq \R^d$ such that $\nabla h_1(\calX') \subseteq \calX$, there exists a constant $k \in \R$ such that $h_1 = h_2 + k$ on $\calX'$. Moreover, if there exists $v \in \calV$ such that $$\{ x - y : x \in \supp \mu^{(v)},\, y \in \supp \nu^{(v)}\} \subseteq \calX'\,,$$ we even have $k = 0$.
\end{theorem}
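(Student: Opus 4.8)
The plan is to deduce the statement from \autoref{thm:convex_cost_pi_known} by verifying its standing hypothesis, namely that for every $v \in \calV(\lambda_d)$ the unique OT plan coincides for $c_1$ and $c_2$. The guiding observation is that $c_\calV$ is a pointwise lower bound for both $c_1$ and $c_2$ which is \emph{tight} at the shared optimal potentials, so that any plan which is optimal for $c_1$ (or $c_2$) is forced to be optimal for $c_\calV$ as well, and uniqueness of the $c_\calV$-plan then pins $c_1$- and $c_2$-optimal plans to be the same.

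First I would apply \autoref{thm:convex_cost_same_dual_sol} (its hypotheses being in force) to get, for each $v \in \calV(\lambda_d)$, a pair $(f^{(v)}, g^{(v)})$ that is, up to additive constants, the unique element of $\calS_{c_1}(\mu^{(v)}, \nu^{(v)}) = \calS_{c_2}(\mu^{(v)}, \nu^{(v)})$; in particular $\int f^{(v)} \de{\mu^{(v)}} + \int g^{(v)} \de{\nu^{(v)}} = \OT_{c_1}(\mu^{(v)}, \nu^{(v)}) = \OT_{c_2}(\mu^{(v)}, \nu^{(v)})$. Since $f^{(v)}(x) + g^{(v)}(y) \le c_i(x,y)$ for all $x, y$ and $i \in \{1,2\}$, taking the supremum over $v \in \calV(\lambda_d)$ yields $c_\calV \le c_i$ everywhere. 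I would record that $c_\calV$, being a supremum of the continuous functions $(x,y) \mapsto f^{(v)}(x) + g^{(v)}(y)$ (the optimal potentials being continuous under the regularity in force, cf.\ \cite{GaMc}), is lower semicontinuous, hence Borel; and that for every $w \in \calV(\lambda_d)$ and every $\pi \in \Pi(\mu^{(w)}, \nu^{(w)})$ one has $f^{(w)}(x) + g^{(w)}(y) \le c_\calV(x,y) \le A \norm{x-y}^p + B$ with both bounds in $L^1(\pi)$, so that $\int c_\calV \de{\pi}$ is a well-defined real number.

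Fix $w \in \calV(\lambda_d)$. From $c_\calV \le c_1$ I get $\OT_{c_\calV}(\mu^{(w)}, \nu^{(w)}) \le \OT_{c_1}(\mu^{(w)}, \nu^{(w)})$, while from $f^{(w)}(x) + g^{(w)}(y) \le c_\calV(x,y)$ together with $\int ( f^{(w)}(x) + g^{(w)}(y) ) \de{\pi(x,y)} = \int f^{(w)} \de{\mu^{(w)}} + \int g^{(w)} \de{\nu^{(w)}}$ for every $\pi \in \Pi(\mu^{(w)}, \nu^{(w)})$ I get the reverse inequality $\OT_{c_\calV}(\mu^{(w)}, \nu^{(w)}) \ge \OT_{c_1}(\mu^{(w)}, \nu^{(w)})$. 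Hence $\OT_{c_\calV}(\mu^{(w)}, \nu^{(w)}) = \OT_{c_1}(\mu^{(w)}, \nu^{(w)}) = \OT_{c_2}(\mu^{(w)}, \nu^{(w)})$. Now let $\pi_i^{(w)}$ be the OT plan between $\mu^{(w)}$ and $\nu^{(w)}$ w.r.t.\ $c_i$, which by \cite[Theorem~1.2]{GaMc} is unique and induced by a map (here $\mu^{(w)} \ll \lambda_d$ is used); then $\int c_\calV \de{\pi_i^{(w)}} \le \int c_i \de{\pi_i^{(w)}} = \OT_{c_i}(\mu^{(w)}, \nu^{(w)}) = \OT_{c_\calV}(\mu^{(w)}, \nu^{(w)})$, so $\pi_i^{(w)}$ attains the infimum defining $\OT_{c_\calV}(\mu^{(w)}, \nu^{(w)})$, i.e.\ it is an OT plan for $c_\calV$. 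By the uniqueness assumption on the $c_\calV$-OT plan, $\pi_1^{(w)} = \pi_2^{(w)}$, and so their induced maps agree $\mu^{(w)}$-a.s.

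Since this holds for every $w \in \calV(\lambda_d)$, \autoref{thm:convex_cost_pi_known} applies and delivers the conclusion: $(\nabla h_1^*)(y) = (\nabla h_2^*)(y)$ on $\calX$, the identity $h_1 = h_2 + k$ on any open connected $\calX'$ with $\nabla h_1(\calX') \subseteq \calX$, and $k = 0$ when the support condition holds. The only step requiring real care is the handling of $c_\calV$: it is merely lower semicontinuous, so I would make sure the argument uses only monotonicity of $\pi \mapsto \int c_\calV \de{\pi}$ and the definition of $\OT_{c_\calV}$ as an infimum over couplings — no duality or existence theory for $c_\calV$ is invoked, since the candidate optimal plans $\pi_1^{(w)}, \pi_2^{(w)}$ come ready-made from the $c_i$-problems and are shown directly to be $c_\calV$-optimal. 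A minor point to confirm is the continuity of the potentials $f^{(v)}, g^{(v)}$, which is what guarantees $c_\calV$ is lower semicontinuous and hence measurable.
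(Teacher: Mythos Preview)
Your proof is correct and follows essentially the same route as the paper: sandwich $c_\calV$ between the shared potentials and $c_i$, show that the $c_i$-optimal plans are $c_\calV$-optimal, invoke the uniqueness hypothesis, and then feed the conclusion into the earlier identifiability result. The only differences are cosmetic---you obtain $\OT_{c_\calV}(\mu^{(w)},\nu^{(w)}) \le \OT_{c_i}(\mu^{(w)},\nu^{(w)})$ from the primal inequality $c_\calV \le c_i$ whereas the paper gets it from the dual inclusion $\Phi_{c_\calV} \subset \Phi_{c_i}$, and you are more explicit about the lower semicontinuity and integrability of $c_\calV$---but the argument is the same.
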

\begin{proof}
	We show that in this setting the OT plan (or map) between $\mu^{(v)}$ and $\nu^{(v)}$ for $v \in \calV(\lambda_d)$ is the same w.r.t.\ both costs $c_1$ and $c_2$. The additional assertions then follow from \autoref{thm:convex_cost_pi_dual_sol_known}.

	For $w \in \calV(\lambda_d)$, \cite[Theorem~1.2]{GaMc} assert that we have the following OT maps for $\mu^{(w)}$ and $\nu^{(w)}$ w.r.t.\ the cost $c_i$, $i \in \{1, 2\}$,
	\begin{equation*}
		T^{(w)}_i(x) = x - (\nabla h_i^*)[\nabla f^{(w)}(x)]\,.
	\end{equation*}
	Since by optimality \cite[Theorem 5.10]{villani2008optimal} it holds for $\mu^{(w)}$-a.e.\ $x$ that
	\begin{equation}\label{eq:dual_map_optimality}
		f^{(w)}(x)+ g^{(w)}(T^{(w)}_1(x))=h_1(x-T^{(w)}_1(x))\,,
	\end{equation}
	and $(f^{(w)}, g^{(w)}) \in \Phi_{c_2}(\mu^{(w)}, \nu^{(w)})$, it follows for $\mu^{(w)}$-a.e.\ $x$ that
	\begin{equation*}
	h_1(x-T^{(w)}_1(x))\le h_2(x - T^{(w)}_1(x))\,.
	\end{equation*}
	Recall the cost function
	\begin{equation*}
		c_{\calV}(x,y)=\sup_{v\in \calV(\lambda_d)} \{ f^{(v)}(x)+ g^{(v)}(y) \}\,,
	\end{equation*}
	and note that
	\begin{equation*}
		\Phi_{c_{\calV}}(\mu^{(w)}, \nu^{(w)}) \subset \Phi_{c_1}(\mu^{(w)}, \nu^{(w)}) \cap \Phi_{c_2}(\mu^{(w)}, \nu^{(w)})\,.
	\end{equation*}
	By definition, it holds $(f^{(w)},g^{(w)})\in \Phi_{c_{\calV}}(\mu^{(w)}, \nu^{(w)})$. Therefore,
	\begin{equation*}
		f^{(w)}(x)+ g^{(w)}(T^{(v)}_1(x)))\leq c_{\calV}(x,T^{(w)}_1(x))\leq h_1(x-T^{(w)}_1(x))\,,
	\end{equation*}
	for all $x \in \calX$, so that, via \eqref{eq:dual_map_optimality}, $c_{\calV}(x,T^{(w)}_1(x))= h_1(x-T^{(w)}_1(x))$, for $\mu^{(w)}$-a.e.\ $x$. Integrating we obtain
	\begin{equation*}
		\int c_{\calV}(x,T^{(w)}_1(x)) \de\mu^{(w)}(x) = \int h_1(x-T^{(w)}_1(x)) \de\mu^{(w)}(x) =\OT_{c_1}(\mu^{(w)},\nu^{(w)}),
	\end{equation*}
	but, since $\Phi_{c_{\calV}}(\mu^{(w)}, \nu^{(w)}) \subset \Phi_{c_1}(\mu^{(w)}, \nu^{(w)})$,
	\begin{align*}
		\OT_{c_{\calV}}(\mu^{(w)},\nu^{(w)}) &= \sup_{(f,g)\in \Phi_{c_{\calV}}(\mu^{(w)},\nu^{(w)})} \int f \de\mu^{(w)} +\int g \de\nu^{(w)} \\
		&\leq \sup_{(f,g)\in \Phi_{c_1}(\mu^{(w)},\nu^{(w)})} \int f \de\mu^{(w)} +\int g \de\nu^{(w)} \\
		&= \int f^{(w)} \de\mu^{(w)} +\int g^{(w)} \de\nu^{(w)}=\OT_{c_1}(\mu^{(w)},\nu^{(w)})\,.
	\end{align*}
	Therefore, $\OT_{c_{\calV}}(\mu^{(w)},\nu^{(w)})=\OT_{c_1}(\mu^{(w)},\nu^{(w)})$ and $(\id \times T^{(w)}_1)_{\#} \mu^{(w)} $ is an OT plan for $\OT_{c_{\calV}}(\mu^{(w)},\nu^{(w)})$. The same holds for $(\id \times T^{(w)}_2)_{\#} \mu^{(w)} $. Under the assumption that $c_{\calV}$ admits a unique OT plan, it holds $\mu^{(w)}$-a.e. that $T^{(w)}_1=T^{(w)}_2$.
\end{proof}

Next, we illustrate our identifiability results with two examples.

\begin{example}[Normal distributions]
	Let $c(x, y) = h(x-y) = \norm{x-y}^2$ be the squared Euclidean distance and note that $h \in \calH$. For vectors $a, b \in \R^d$ and symmetric, positive definite matrices $A,\, B \in \R^{d \times d}$, consider two multivariate normal distributions $\mu \defeq \calN(a, A)$ and $\nu \defeq \calN(b, B)$. In this setting, the OT problem is well-studied (see \cite{Dowson82,Olikin82} or \cite{Takatsu2011} and references therein) and upon defining the matrix $D \defeq A^{-1/2}(A^{1/2} B A^{1/2})^{1/2} A^{-1/2}$, the OT map can be written as
	\begin{equation*}
		T(x) = x - ([I - D] x + D a - b) = x - \frac{1}{2} \nabla f(x)\,,
	\end{equation*}
	where $f$ is the first optimal potential. Hence, the image of $\nabla f$ is an affine subspace of $\R^d$. Taking $A = \sigma_1^2 I$ and $B = \sigma_2^2 I$ for $\sigma_1,\,\sigma_2 > 0$ where $I \in \R^{d \times d}$ is the identity matrix, it follows that $D = \sigma_1^{-1} \sigma_2 I$ and thus
	\begin{equation*}
		\{ \nabla f(x) : x \in \opint \supp \mu\} = \begin{cases}
			\R^d & \sigma_1 \neq \sigma_2\,, \\
			2(a - b) & \sigma_1 = \sigma_2\,.
		\end{cases}
	\end{equation*}
	Consequently, \autoref{thm:convex_cost_pi_dual_sol_known} yields that the optimal potentials together with the OT plan for one pair of Gaussians with different variances identify the squared Euclidean distance as the cost in the class of cost functions induced by $\calH$. In the case of equal variances, we obtain identifiability if for every $x \in \R^d$ we observe the OT between normal distributions such that $x=a-b$. Note that in this case the OT map is just the translation $T(x) = x - a + b$.

	Furthermore, if we observe a dense subset of an open set of marginals that contain the above normal distributions, \autoref{thm:convex_cost_pi_known} yields identifiability in the case that we only know the OT plans and not the optimal potentials. In particular, for $\sigma_1 \neq \sigma_2$ we only need to know the OT plan between $\mu$ and $\nu$ (and not between the other marginals).
\end{example}

\begin{example}[Real line]
	Consider the case $d=1$ and a cost function $c$ that is induced by $h \in \calH$. For probability measures $\mu,\,\nu \in \calP(\R)$ such that $\mu \ll \lambda_1$, we have by \cite[Theorem~2.9]{Santambrogio2015} that the monotone map is optimal,
	\begin{equation*}
		T(x) = F^{-1}_\nu[F_\mu(x)] = x - (h')^{-1}[f'(x)] \,,
	\end{equation*}
	where $F_\nu^{-1}$ is the quantile function of $\nu$, $F_\mu$ the cumulative distribution function (c.d.f.) of $\mu$ and $f$ the first optimal potential. Hence, it holds that
	\begin{equation*}
		f'(x) = h'(x - F^{-1}_\nu[F_\mu(x)])\,.
	\end{equation*}
	Thus, \autoref{thm:convex_cost_pi_dual_sol_known} yields an identifiability criterion and, roughly speaking, the structure of $F_\nu^{-1} \circ F_\mu$ determines the contribution of the pair $(\mu, \nu)$ to the identifiability of the cost $c$. Notably, this is minimal for $F_\nu^{-1} \circ F_\mu = \id$, that is $\mu = \nu$.

	Assume now that the marginals are coming from a location-scale family which is generated by a strictly increasing and differentiable c.d.f.\ $G$. Then, it holds for $F_\mu = G_{a,b}$ and $F_\nu = G$, say, that
	\begin{equation*}
		f'(x) = h'([1 - b]x - a)\,.
	\end{equation*}
	In particular, \autoref{thm:convex_cost_pi_dual_sol_known} implies that $h$ is identifiable in $\calH$ if we only observe one pair of the location-scale family with $b \neq 1$. Recall that this requires knowledge of the OT cost and a joint OT plan. In \autoref{sec:real_line}, we will discuss this setting in the case where we only observe the OT costs and marginals.
\end{example}

\subsection{Concave Cost Functions of the Distance} \label{subsec:concave_cost}

Notably, Gangbo and McCann \cite{GaMc} also provide a formula of the OT map for concave costs of the distance. Denote with $\calL$ the set of strictly concave and differentiable functions $l : [0, \infty) \to [0, \infty)$ with $l(0) = 0$, and the induced cost functions with $\calL_{\norm{\argdot}} \defeq \{ l \circ \norm{\argdot} \mid l \in \calL\}$. As for $\calH$ and $\calH_p$, denote with $\calL_p$ the subset of functions in $\calL$ that are dominated by $\norm{\argdot}^p$. Since the common mass $\min(\mu, \nu)$ is not transported for probability measures $\mu$, $\nu$ in this setting, this map is to be understood as the OT map between the rest of the mass, that is $[\mu - \nu]_+$ and $[\nu - \mu]_+$. As for the convex case, we use this map to derive identifiability results. Again, we start with the case where we observe the full OT information.

\begin{theorem}[Identifiability for strictly concave costs of the distance] \label{thm:concave_cost_dual_sol_known}
	Let $c_i : \R^d \times \R^d \to [0,\infty)$ be induced by $h_i \defeq l_i \circ \norm{\argdot} \in \calL_{\norm{\argdot}}$, i.e., $c_i(x,y)=h_i(x-y)$, for $i \in \{1, 2\}$. If there exists a non-empty set
    \begin{equation*}
        \{(\mu^{(v)},\nu^{(v)})\}_{v\in \calV} \subseteq \calP(\R^d) \times \calP(\R^d)
    \end{equation*}
	  such that for every $v\in \calV$ it holds that $[\mu^{(v)} - \nu^{(v)}]_+$ vanishes on $\supp[\nu^{(v)} - \mu^{(v)}]_+$ and all rectifiable sets of dimension $d-1$, and there exist optimal potentials $(f^{(v)}, g^{(v)})$ for the transport between $[\mu^{(v)} - \nu^{(v)}]_+$ and $[\nu^{(v)} - \mu^{(v)}]_+$ w.r.t.\ $c_1$ and $c_2$ simultaneously,
	then $c_1$ and $c_2$ have the same (unique) OT plan and map for all $v \in \calV$. Moreover, it holds that $(\nabla h_1^*)(y)= (\nabla h_2^*)(y)$ for all
	\begin{equation*}
		y \in \calX \defeq \bigcup_{v \in \calV} \{ \nabla f^{(v)}(x): x \in \opint(\supp [\mu^{(v)} - \nu^{(v)}]_+ )\}\,.
	\end{equation*}
	In particular, let $\calX'$ be open and connected with $l'_1(\calX') \subseteq \{ \norm{x} : x \in \calX\}$, then it holds $l_1 = l_2 + k$ on $\calX'$ for some constant $k \in \R$. Notably, if $0$ is a limit point of $\calX'$ or if there exists $v \in \calV$ such that $\OT_{c_1}(\mu^{(v)}, \nu^{(v)}) = \OT_{c_2}(\mu^{(v)},\nu^{(v)})$ and $$\{ x - y : x \in \supp \mu^{(v)},\, y \in \supp \nu^{(v)}\} \subseteq \calX'\,,$$ we even have $k = 0$.
\end{theorem}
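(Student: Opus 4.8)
The plan is to mirror the strategy of \autoref{thm:convex_cost_pi_dual_sol_known}, but using the Gangbo--McCann representation of the OT map for strictly concave costs of the distance, i.e. \cite[Theorem~1.4]{GaMc} in place of \cite[Theorem~1.2]{GaMc}. The hypotheses on $[\mu^{(v)} - \nu^{(v)}]_+$ (vanishing on $\supp[\nu^{(v)} - \mu^{(v)}]_+$ and on all rectifiable sets of dimension $d-1$) are exactly those under which Gangbo--McCann guarantee existence and uniqueness of the OT map between $[\mu^{(v)} - \nu^{(v)}]_+$ and $[\nu^{(v)} - \mu^{(v)}]_+$, together with an explicit formula. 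First I would recall that formula: for the cost $c_i$ induced by $h_i = l_i \circ \norm{\argdot}$, the OT map pushing $[\mu^{(v)} - \nu^{(v)}]_+$ to $[\nu^{(v)} - \mu^{(v)}]_+$ has the form
\begin{equation*}
	T^{(v)}_i(x) = x - (\nabla h_i^*)[\nabla f^{(v)}(x)]\,,
\end{equation*}
where $f^{(v)}$ is the first optimal potential and $\nabla h_i^*$ denotes the gradient of the concave conjugate as defined in \autoref{section:notation}. Since the optimal potentials $(f^{(v)}, g^{(v)})$ are, by assumption, optimal for $c_1$ and $c_2$ simultaneously, and since the OT map is unique (a.e.\ w.r.t.\ $[\mu^{(v)} - \nu^{(v)}]_+$), we conclude $T^{(v)}_1 = T^{(v)}_2$ up to a $[\mu^{(v)} - \nu^{(v)}]_+$-null set, hence $(\nabla h_1^*)[\nabla f^{(v)}(x)] = (\nabla h_2^*)[\nabla f^{(v)}(x)]$ for $[\mu^{(v)} - \nu^{(v)}]_+$-a.e.\ $x$. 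Evaluating on $\opint(\supp[\mu^{(v)} - \nu^{(v)}]_+)$ and taking the union over $v \in \calV$ gives $(\nabla h_1^*)(y) = (\nabla h_2^*)(y)$ for all $y \in \calX$.

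For the second assertion, I would pass from $\nabla h_i^*$ back to $l_i'$. Here the bookkeeping is a little more delicate than in the convex case because $h_i = l_i \circ \norm{\argdot}$ is not differentiable at the origin and the conjugate is defined through $(-l_i)^*$. Concretely, for $y \neq 0$ one has $\nabla h_i^*(y) = (l_i')^{-1}(\norm{y}) \cdot y / \norm{y}$ (using strict concavity and differentiability of $l_i$, so $l_i'$ is a decreasing bijection onto its range and invertible), so equality of $\nabla h_1^*$ and $\nabla h_2^*$ on $\calX$ forces $(l_1')^{-1} = (l_2')^{-1}$ on $\{\norm{y} : y \in \calX\}$, equivalently $l_1' = l_2'$ on the image set $(l_1')^{-1}(\{\norm{y}:y\in\calX\})$. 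Then for an open connected $\calX'$ with $l_1'(\calX') \subseteq \{\norm{x}:x\in\calX\}$, applying $(l_1')^{-1}$ shows $\calX' \subseteq (l_1')^{-1}(\{\norm{y}:y\in\calX\})$, so $l_1' = l_2'$ on $\calX'$ and integrating over the (connected, hence interval) set $\calX'$ yields $l_1 = l_2 + k$ there for some constant $k$.

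Finally, the pinning-down of $k$. If there is $v \in \calV$ with $\OT_{c_1}(\mu^{(v)},\nu^{(v)}) = \OT_{c_2}(\mu^{(v)},\nu^{(v)})$ and $\{x - y : x \in \supp\mu^{(v)}, y \in \supp\nu^{(v)}\} \subseteq \calX'$, the argument of \autoref{thm:convex_cost_pi_dual_sol_known} carries over verbatim: $c_1 = c_2 + k$ on $\supp\mu^{(v)} \times \supp\nu^{(v)}$, and comparing $\OT_{c_1} = \OT_{c_2+k} = \OT_{c_2} + k$ forces $k = 0$. The genuinely new case is the alternative hypothesis that $0$ is a limit point of $\calX'$. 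Here I would use $l_i(0) = 0$ together with continuity of $l_i$: since $l_1 = l_2 + k$ on $\calX'$ and we can take a sequence $x_n \in \calX'$ with $x_n \to 0$, continuity gives $l_1(0) = l_2(0) + k$, i.e.\ $0 = 0 + k$, so $k = 0$. I expect this last limiting step to be the only real subtlety, and it is mild; the main obstacle overall is just making sure the formula for $\nabla h_i^*$ on $\R^d \setminus \{0\}$ and the inversion of $l_i'$ are correctly matched with the conjugate convention fixed in \autoref{section:notation}, and checking that the Gangbo--McCann hypotheses (H1)–(H4) for concave costs are subsumed by the stated assumptions on $[\mu^{(v)} - \nu^{(v)}]_+$ and by $l_i \in \calL$.
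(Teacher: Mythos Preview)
Your argument has a genuine gap at the step where you conclude $T_1^{(v)} = T_2^{(v)}$. You invoke ``uniqueness of the OT map'', but uniqueness here is for each cost separately: Gangbo--McCann tells you that $T_1^{(v)}$ is the unique optimal map for $c_1$ and $T_2^{(v)}$ the unique optimal map for $c_2$. These are two different optimization problems, and nothing you have written forces their optimizers to coincide. In the convex theorem (\autoref{thm:convex_cost_pi_dual_sol_known}) this step works precisely because a common OT plan $\pi^{(v)}$ is part of the \emph{hypotheses}: uniqueness of the plan for each $c_i$ then pins both $(\id,T_i^{(v)})_\#\mu^{(v)}$ to that same $\pi^{(v)}$. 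In the present theorem, equality of the plans is part of the \emph{conclusion}; only the potentials are assumed shared. So you cannot simply mirror \autoref{thm:convex_cost_pi_dual_sol_known}.

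The paper bridges this with an auxiliary cost $c_{\min}(x,y) \defeq \min(h_1,h_2)(x-y)$. Since $(f^{(v)},g^{(v)})$ is feasible for both $c_1$ and $c_2$, it lies in $\Phi_{c_{\min}} = \Phi_{c_1}\cap\Phi_{c_2}$; combining this with the optimality relation $f^{(v)}(x)+g^{(v)}(T_i^{(v)}(x)) = h_i(x-T_i^{(v)}(x))$ one shows that each $T_i^{(v)}$ is also an optimal map for $c_{\min}$. The key observation is that $l_{\min}\defeq\min(l_1,l_2)$ is again strictly concave with $l_{\min}(0)=0$, so \cite[Theorem~6.4]{GaMc} gives uniqueness of the optimal map for $c_{\min}$, whence $T_1^{(v)}=T_2^{(v)}$. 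This is exactly where concavity is exploited and explains, as the paper remarks after the theorem, why the concave result needs only shared potentials while the convex one needs a shared plan. Once $T_1^{(v)}=T_2^{(v)}$ is established, the remainder of your outline---computing $\nabla h_i^*(y)=(l_i')^{-1}(\norm{y})\,y/\norm{y}$, deducing $l_1'=l_2'$ on $\calX'$, integrating, and pinning $k$ via either $l_i(0)=0$ or the OT-value condition---is correct and matches the paper.
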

\begin{proof}
	For $v\in \calV$, the OT maps between $\mu^{(v)}_0 \defeq [\mu^{(v)} - \nu^{(v)}]_+$ and $\nu^{(v)}_0 \defeq [\nu^{(v)} - \mu^{(v)}]_+$ are according to \cite[Theorem~6.4]{GaMc} given by
	\begin{equation*}
		T^{(v)}_i(x) = x - \nabla h^*_i(\nabla f^{(v)})\,.
	\end{equation*}
	We define the cost
	\begin{equation*}
		c_{\min}(x,y) = h_{\min}(x-y) \defeq \min(h_1(x-y), h_2(x-y))
	\end{equation*}
	and consider OT between $\mu^{(v)}_0$ and $\nu^{(v)}_0$. The following equality holds:
	\begin{equation*}
		\Phi_{c_{\min}}(\mu^{(v)}_0, \nu^{(v)}_0) = \Phi_{c_1}(\mu^{(v)}_0, \nu^{(v)}_0) \cap \Phi_{c_2}(\mu^{(v)}_0, \nu^{(v)}_0).
	\end{equation*}
	In particular, it holds that $(f^{(v)},g^{(v)})\in \Phi_{c_{\min}}(\mu^{(v)}_0, \nu^{(v)}_0)$. Therefore, it follows for all $x \in \R^d$ that
	\begin{equation*}
		f^{(v)}(x)+ g^{(v)}(T^{(v)}_1(x))\leq h_{\min}(x-T^{(v)}_1(x))\leq h_1(x-T^{(v)}_1(x))\,,
	\end{equation*}
	Using optimality \eqref{eq:dual_map_optimality}, we see that $h_{\min}(x-T^{(v)}_1(x))= h_1(x-T^{(v)}_1(x))$ for $\mu^{(v)}_0$-a.e.\ $x$. Integrating we obtain
	\begin{equation*}
		\int h_{\min}(x-T^{(v)}_1(x)) \de\mu^{(v)}_0(x) = \int h_1(x-T^{(v)}_1(x)) \de\mu^{(v)}_0(x) =\OT_{c_1}(\mu^{(v)}_0,\nu^{(v)}_0),
	\end{equation*}
	but, since $\Phi_{c_{\min}}(\mu^{(v)}_0, \nu^{(v)}_0) \subset \Phi_{c_1}(\mu^{(v)}_0, \nu^{(v)}_0)$,
	\begin{align*}
		\OT_{c_{\min}}(\mu^{(v)}_0,\nu^{(v)}_0) &= \sup_{(f,g)\in \Phi_{c_{\min}}(\mu^{(v)}_0, \nu^{(v)}_0)} \int f \de\mu^{(v)}_0 +\int g \de\nu^{(v)}_0 \\
		&\leq \sup_{(f,g)\in \Phi_{c_1}(\mu^{(v)}_0, \nu^{(v)}_0)} \int f \de\mu^{(v)}_0 +\int g \de\nu^{(v)}_0 \\
		&= \int f^{(v)} \de\mu^{(v)}_0 + \int g^{(v)} \de\nu^{(v)}_0 = \OT_{c_1}(\mu^{(v)}_0,\nu^{(v)}_0)\,.
	\end{align*}
	Therefore, $\OT_{c_{\min}}(\mu^{(v)}_0,\nu^{(v)}_0) = \OT_{c_1}(\mu^{(v)}_0,\nu^{(v)}_0)$ and $\pi_{\min}=(I\times T^{(v)}_1)_{\#} \mu^{(v)}_0 $ is a OT plan w.r.t.\ $c_{\min}$. We now see that $\pi_{\min}$ is unique. To this end, note that $c_{\min}(x,y) = l_{\min}(\norm{x-y})$ where $l_{\min} \defeq \min[l_1, l_2]$ is strictly concave and continuous with $l_{\min}(0) = 0$. Hence, uniqueness follows from \cite[Theorem~6.4]{GaMc}.

	The choice of $i=1$ was arbitrary so that $\pi_{\min}=(I\times T^{(v)}_2)_{\#} \mu^{(v)}_0$ holds as well. This proves that for $\mu_0^{(v)}$-a.e.\ $x$ it holds $T^{(v)}_2(x) =T^{(v)}_1(x)$ and therefore
	\begin{equation*}
		(\nabla h_1^*)[\nabla f^{(v)}(x)]= (\nabla h_2^*)[\nabla f^{(v)}(x)]\,.
	\end{equation*}
	Hence, we have that $ (\nabla h_1^*)(y)= (\nabla h_2^*)(y)$ for all $y \in \calX$. In particular, this implies that $h_1^* = h_2^* + k$ on each open and connected component of $\calX$ with $k \in \R$. If these components cover $[0, \infty)$ in norm, we have by definition of $h_1^*$ and $h_2^*$ that
	\begin{equation*}
		(-l_1)^*(-y) = (-l_2)^*(-y) + k\,, \quad y \in [0, \infty)\,,
	\end{equation*}
	and $l_1 = l_2$ follows from the uniqueness of the convex conjugate (Fenchel–Moreau theorem) for $-l_1$ and $-l_2$ and the assumption that $l_1(0) = 0 = l_2(0)$.

	If $l_1$ and $l_2$ are differentiable, we get for $x \in \R^d \setminus \{ 0 \}$ and $i \in \{1, 2\}$ that
	\begin{equation*}
		\nabla h_i^*(x) = [(-l_i)^*]'(-\norm{x}) \frac{x}{\norm{x}} = (l'_i)^{-1}(\norm{x})\frac{x}{\norm{x}}\,.
	\end{equation*}
	This implies for all $y \in \R^d \setminus \{0\}$ that
	\begin{equation*}
		(l'_1)^{-1}(\norm{y}) \frac{y}{\norm{y}} = (l'_2)^{-1}(\norm{y}) \frac{y}{\norm{y}} \implies (l'_1)^{-1}(\norm{y}) = (l'_2)^{-1}(\norm{y})\,.
	\end{equation*}
	Given some open and connected subset $\calX' \subseteq \R$ with $l'_1(\calX') \subseteq \{ \norm{x} : x \in \calX \}$, it follows for $z \in \calX'$ that
	\begin{equation*}
		z = (l'_1)^{-1}(l'_1(z)) = (l'_2)^{-1}(l'_1(z))\,,
	\end{equation*}
	and therefore $l'_2 = [(l'_2)^{-1}]^{-1} = l'_1$ on $\calX'$. Hence, we conclude that $l_1 = l_2 + k$ on $\calX'$ for some constant $k \in \R$. If $0$ is a limit point of $\calX'$, we must have $0 = l_1(0) = l_2(0) + k = k$. If $\{ \norm{x-y} : x \in \supp \mu^{(v)}, \, y \in \supp \nu^{(v)} \}\subseteq \calX'$ for one $v \in \calV$, then it follows that $c_1 = c_2 +k$ on $\supp\mu^{(v)} \times \supp\nu^{(v)}$. This together with the assumption that $\OT_{c_1}(\mu^{(v)}, \nu^{(v)}) = \OT_{c_2}(\mu^{(v)}, \nu^{(v)})$ implies $k=0$.
\end{proof}

Note that \autoref{thm:concave_cost_dual_sol_known} only requires knowledge of the optimal potentials and not of the OT plans. This is due to the concavity of the cost function and, in contrast, is not the case for convex costs, see \autoref{thm:convex_cost_pi_dual_sol_known}.

To get rid of the assumption of knowing the optimal potentials, we proceed as before for convex costs and use \autoref{thm:same_dual_sol}. To this end, we provide uniqueness conditions for the optimal potentials for concave costs. Following the proof for convex costs \cite[Collary~2.7]{del2021central}, which relies on the Lipschitzianity of potentials, we see that this is more involved for concave costs as the derivative $l'$ may have infinite slope at $0$. We evade this by constraining the support of $\mu$ and $\nu$.

\begin{lemma} \label{lemma:concave_cost_unique_potentials_supp}
	Let $c : \R^d \times \R^d \to [0, \infty)$ be induced by $l \circ \norm{\argdot}$ with $l \in \calL$. Then, it holds for probability measures $\mu,\,\nu \in \calP(\R^d)$ such that $\mu \ll \lambda_d$ has connected support and negligible boundary as well as $\supp\mu \cap \supp\nu = \emptyset$ that their optimal potentials w.r.t.\ $c$ are unique (up to additive constants and on $\opint [\supp \mu]$).
\end{lemma}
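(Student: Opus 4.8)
The plan is to mimic the uniqueness argument for convex costs from \cite[Corollary~2.7]{del2021central}, which rests on the Lipschitz regularity of optimal potentials together with differentiability of the marginal, but to localize it so as to avoid the obstruction that $l'$ may blow up near $0$. The key observation is that when $\supp\mu \cap \supp\nu = \emptyset$, the relevant increments $\norm{x-y}$ for $x \in \supp\mu$, $y \in \supp\nu$ are bounded away from $0$, so that $l$ restricted to the relevant range is Lipschitz (and $l'$ bounded), restoring the regularity used in the convex case.

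First I would recall the structure of the dual problem: any optimal pair $(f,g) \in \calS_c(\mu,\nu)$ can, without loss of generality, be replaced by a $c$-conjugate pair, i.e. $f = g^c$ and $g = f^c$ where $f^c(y) = \inf_x \{c(x,y) - f(x)\}$. Since $\supp\mu$ is compact-ish in the sense that the increments $x - y$ stay in a set $K \defeq \{x - y : x \in \supp\mu,\, y \in \supp\nu\}$ which is bounded away from the origin (as the supports are disjoint and closed --- here one should be slightly careful, taking a small neighborhood argument if $\supp\nu$ is unbounded), the cost $c$ restricted to $\supp\mu \times \supp\nu$ inherits a Lipschitz modulus from $l$ on $[\delta, \infty)$ for some $\delta > 0$. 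Consequently every optimal $f$ (taken as a $c$-transform) is Lipschitz on $\opint[\supp\mu]$ with a uniform constant, hence differentiable $\lambda_d$-a.e.\ there by Rademacher.

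Next, I would invoke the first-order optimality / $c$-cyclical monotonicity: for $\mu$-a.e.\ $x$, the infimum defining $g(x) = \inf_y\{c(x,y) - f(y)\}$ --- or rather the complementary slackness $f(x) + g(y) = c(x,y)$ on the support of any optimal plan --- combined with differentiability of $f$ at $x$ forces $\nabla f(x) = \nabla_x c(x, T(x)) = \nabla h(x - T(x))$ for the (essentially unique, by \cite[Theorem~6.4]{GaMc}) optimal map $T$. Since $h = l \circ \norm{\argdot}$ with $l$ strictly concave and differentiable, $\nabla h$ is injective on $\R^d \setminus \{0\}$, so $\nabla f$ determines $x - T(x)$ and hence is itself pinned down $\lambda_d$-a.e.\ on $\opint[\supp\mu]$ by the geometry of the (unique) transport. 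Because $\supp\mu$ is connected and $\mu\ll\lambda_d$ with negligible boundary, two optimal potentials $f_1, f_2$ then have $\nabla f_1 = \nabla f_2$ $\lambda_d$-a.e.\ on the connected open set $\opint[\supp\mu]$, whence $f_1 - f_2$ is constant there; the negligible-boundary hypothesis ensures this constant pins down $f$ on all of $\supp\mu$ up to null sets, and the corresponding $g = f^c$ is then determined as well.

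The main obstacle I anticipate is handling the potential non-compactness of $\supp\nu$: disjointness of closed supports alone does not give a positive distance between them in general, so the claim ``$\norm{x-y} \geq \delta > 0$ for all $x \in \supp\mu$, $y \in \supp\nu$'' needs care. The natural fix is to use that $\mu$, $\nu$ are probability measures so that the optimal map $T$ maps $\supp\mu$ (essentially) into $\supp\nu$, and to exploit concavity: because $l$ is concave with $l(0)=0$, mass that would be ``transported'' a very short distance is not transported at all in the Gangbo--McCann decomposition --- but here $[\mu - \nu]_+ = \mu$ and $[\nu-\mu]_+ = \nu$ precisely because the supports are disjoint, so no cancellation occurs and one must instead argue that, restricted to $\opint[\supp\mu]$ which is where we need differentiability, the relevant transport distances are locally bounded below. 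A clean way is to fix a point $x_0 \in \opint[\supp\mu]$, take a small ball $B \subseteq \opint[\supp\mu]$ around it, observe $\dist(B, \supp\nu) \geq \dist(x_0, \supp\nu) - \mathrm{rad}(B) > 0$ since $x_0 \notin \supp\nu$ (as the supports are disjoint and $\supp\nu$ is closed), and run the Lipschitz/Rademacher argument locally on each such ball, then patch using connectedness. I would expect the bookkeeping of this localization --- rather than any deep new idea --- to be where the proof spends most of its effort.
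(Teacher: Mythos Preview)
Your proposal is correct and follows essentially the same route as the paper: disjoint supports bound the transport distances away from zero, so the concavity of $l$ gives Lipschitz potentials, Rademacher plus the uniqueness of the Gangbo--McCann map then pins down $\nabla f$ $\mu$-a.e., and connectedness together with negligible boundary yields the constant. The one noteworthy difference is that the paper simply asserts $\delta \defeq \inf\{\norm{x-y} : x\in\supp\mu,\ y\in\supp\nu\} > 0$ globally, which need not hold when both supports are unbounded; your localization---using that each $x_0 \in \opint[\supp\mu]$ has positive distance to the closed set $\supp\nu$, hence a local Lipschitz bound $l'(\delta(x_0))$ on the $c$-transform near $x_0$, and then patching via connectedness---is the rigorous version of that step and is exactly what is needed.
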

\begin{proof}
	As $\supp\mu \cap \supp\nu = \emptyset$, note that
	\begin{equation*}
		\delta \defeq \inf \{ \norm{x - y} : x \in \supp\mu,\, y \in \supp\nu \} > 0\,.
	\end{equation*}
	Together with the fact that $l'$ is decreasing, it follows that $l$ is $l'(\delta)$-Lipschitz continuous on $\{ \norm{x - y} : x \in \supp\mu,\, y \in \supp\nu \}$. As a consequence, optimal potentials $f_1$ and $f_2$ are also Lipschitz continuous.

	Further, by uniqueness of the OT map \cite[Theorem~6.4]{GaMc}, it follows $\mu$-a.e.\ that
	\begin{equation*}
		y \defeq \nabla h^*(\nabla f_1(x)) = \nabla h^*(\nabla f_2(x))\,,
	\end{equation*}
    where we recall that $h^*$ is the concave conjugate of $h$. As
	\begin{equation*}
		\nabla h(z) = \frac{l'(\norm{z})}{\norm{z}} z
	\end{equation*}
	is invertible, note that $\nabla h^* = [\nabla h]^{-1}$. Hence, we get for $\mu$-a.e.\ $x$ that
	\begin{equation*}
		\nabla h(y) = \nabla f_1(x) = \nabla f_2(x)\,,
	\end{equation*}
	which implies uniqueness as in the proof of \cite[Collary~2.7]{del2021central}.
\end{proof}

Using \autoref{lemma:concave_cost_unique_potentials_supp} to get uniqueness, we can change the assumption of knowing the optimal potentials to knowing the values and marginals on an open set. This way, we obtain a result were we only need the marginals and OT costs between them.

\begin{corollary}
\label{thm:concave_cost_dual_sol_unknown} Let $c_i : \R^d \times \R^d \to [0, \infty)$ be induced by $h_i \defeq l_i \circ \norm{\argdot}\in \calL_{p, \norm{\argdot}}$, i.e., $c_i(x,y) = h_i(x-y)$, for $i \in \{1, 2\}$. \\
Assume that there exists a dense subset $\{(\mu^{(v)},\nu^{(v)})\}_{v\in \calV}$ of a non-empty open set of $\calP_p(\R^d) \times \calP_p(\R^d)$ satisfying
		\begin{equation*}
			\OT_{c_1}(\mu^{(v)}, \nu^{(v)}) = \OT_{c_2}(\mu^{(v)},\nu^{(v)}) \qquad \text{for all } v\in \calV\,.
		\end{equation*}
	Define the set
	\begin{equation*}
		\calV_\emptyset \defeq \left\{ v \in \calV \,\middle|\,
		\begin{matrix}
			\supp \mu^{(v)} \cap \supp \nu^{(v)} = \emptyset \text{ and } \\
			\text{$\mu^{(v)} \ll \lambda_d$ has connected support and negligible boundary}
		\end{matrix}
		\right\}\,.
	\end{equation*}
	Then, it holds that $c_1$ and $c_2$ have the same (unique) OT plan, map and optimal potential $f^{(v)}$ for all $v \in \calV_\emptyset$. Moreover, $(\nabla h_1^*)(y)= (\nabla h_2^*)(y)$, for all
	\begin{equation*}
		y\in \calX \defeq \bigcup_{v \in \calV_\emptyset} \{ \nabla f^{(v)}(x): x \in \opint(\supp \mu^{(v)})\}\,.
	\end{equation*}
	In particular, let $\calX'$ be open and connected with $l'_1(\calX') \subseteq \{ \norm{x} : x \in \calX \}$, then it holds $l_1 = l_2 + k$ on $\calX'$ for some constant $k \in \R$. Notably, if $0$ is a limit point of $\calX'$ or if there exists $v \in \calV$ such that $$\{ x - y : x \in \supp \mu^{(v)},\, y \in \supp \nu^{(v)}\} \subseteq \calX'\,,$$ we even have $k = 0$.
\end{corollary}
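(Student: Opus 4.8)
The plan is to run the same two-stage scheme used for convex costs --- first derive shared optimal potentials (as in \autoref{thm:convex_cost_same_dual_sol}), then invoke the structural identifiability theorem (as in \autoref{thm:convex_cost_pi_known}) --- but with \autoref{lemma:concave_cost_unique_potentials_supp} in place of \cite[Corollary~2.7]{del2021central} as the source of uniqueness of potentials, and with \autoref{thm:concave_cost_dual_sol_known} in place of \autoref{thm:convex_cost_pi_dual_sol_known} as the final step. The concave case is actually cleaner: \autoref{thm:concave_cost_dual_sol_known} manufactures the shared OT plan out of the shared potentials, so, unlike \autoref{thm:convex_cost_pi_known}, no hypothesis on the plans is required.

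\emph{From equal costs to equal potentials.} The bound $c_i(x,y)=l_i(\norm{x-y})\le A\norm{x-y}^p+B$ holds since $l_i\in\calL_p$, so the standing hypothesis of \autoref{thm:same_dual_sol} is met. By joint continuity of $\OT_{c_i}$ on $\calP_p(\R^d)\times\calP_p(\R^d)$ (cf.\ \cite[Theorem~5.20]{villani2008optimal} and the proof of \autoref{thm:same_dual_sol}) and density, the identity $\OT_{c_1}=\OT_{c_2}$ extends from the index set to the whole open set $\mathcal{O}\subseteq\calP_p(\R^d)\times\calP_p(\R^d)$. Fix $w\in\calV_\emptyset$ and a basic open box $\calU_1\times\calU_2\subseteq\mathcal{O}$ containing $(\mu^{(w)},\nu^{(w)})$; then $\mu\mapsto\OT_{c_1}(\mu,\nu^{(w)})-\OT_{c_2}(\mu,\nu^{(w)})$ vanishes on the open set $\calU_1$, so \autoref{thm:same_dual_sol} applies with second marginal $\nu^{(w)}$ and the dense subset taken to be $\calU_1$ itself (which contains $\mu^{(w)}$). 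Since $w\in\calV_\emptyset$, $\mu^{(w)}\ll\lambda_d$ has connected support and negligible boundary with $\supp\mu^{(w)}\cap\supp\nu^{(w)}=\emptyset$, so \autoref{lemma:concave_cost_unique_potentials_supp} applied to $c_1$ and to $c_2$ gives uniqueness (up to additive constants on $\opint[\supp\mu^{(w)}]$) of the optimal potentials for both costs; \autoref{thm:same_dual_sol} then yields a pair $(f^{(w)},g^{(w)})$ optimal for $c_1$ and $c_2$ simultaneously.

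\emph{Invoking \autoref{thm:concave_cost_dual_sol_known}.} For $v\in\calV_\emptyset$ the disjointness $\supp\mu^{(v)}\cap\supp\nu^{(v)}=\emptyset$ forces $\mu^{(v)}\perp\nu^{(v)}$, hence $[\mu^{(v)}-\nu^{(v)}]_+=\mu^{(v)}$ and $[\nu^{(v)}-\mu^{(v)}]_+=\nu^{(v)}$; moreover $\mu^{(v)}$, being absolutely continuous, charges neither $\supp\nu^{(v)}$ nor any $(d-1)$-rectifiable set. So $\{(\mu^{(v)},\nu^{(v)})\}_{v\in\calV_\emptyset}$ together with the common potentials from the previous step satisfies the hypotheses of \autoref{thm:concave_cost_dual_sol_known}. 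That theorem delivers: $c_1$ and $c_2$ share the unique OT plan, map and potential $f^{(v)}$ for all $v\in\calV_\emptyset$; $(\nabla h_1^*)(y)=(\nabla h_2^*)(y)$ on $\calX=\bigcup_{v\in\calV_\emptyset}\{\nabla f^{(v)}(x):x\in\opint(\supp\mu^{(v)})\}$ (using again $[\mu^{(v)}-\nu^{(v)}]_+=\mu^{(v)}$); $l_1=l_2+k$ on any open connected $\calX'$ with $l_1'(\calX')\subseteq\{\norm{x}:x\in\calX\}$; and $k=0$ when $0$ is a limit point of $\calX'$ (by $l_i(0)=0$ and continuity) or when some $v\in\calV$ satisfies $\{x-y:x\in\supp\mu^{(v)},\,y\in\supp\nu^{(v)}\}\subseteq\calX'$, since then $c_1=c_2+k$ on $\supp\mu^{(v)}\times\supp\nu^{(v)}$ and the assumed $\OT_{c_1}(\mu^{(v)},\nu^{(v)})=\OT_{c_2}(\mu^{(v)},\nu^{(v)})$ forces $k=0$ --- exactly the argument already used in \autoref{thm:concave_cost_dual_sol_known}.

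The one genuinely non-formal point is the reduction in the passage from equal costs to equal potentials: \autoref{thm:same_dual_sol} is stated with a fixed second marginal, whereas here both marginals range over a dense subset of an open subset of the product space. Spreading $\OT_{c_1}=\OT_{c_2}$ to all of $\mathcal{O}$ and then restricting to a coordinate slice relies on $\mathcal{O}$ being open in the product $\W_p$-topology and on joint continuity of $(\mu,\nu)\mapsto\OT_{c_i}(\mu,\nu)$ under the $\norm{\argdot}^p$-growth bound. Everything else --- the growth estimate for $\calL_p$, the identifications $[\mu^{(v)}-\nu^{(v)}]_+=\mu^{(v)}$ and $[\nu^{(v)}-\mu^{(v)}]_+=\nu^{(v)}$ for $v\in\calV_\emptyset$, and checking the hypotheses of \autoref{lemma:concave_cost_unique_potentials_supp} and \autoref{thm:concave_cost_dual_sol_known} --- is routine bookkeeping.
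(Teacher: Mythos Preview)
Your proof is correct and follows essentially the same route as the paper: use \autoref{lemma:concave_cost_unique_potentials_supp} for uniqueness of potentials, feed this into \autoref{thm:same_dual_sol} to get shared potentials, note that disjoint supports trivialize the Jordan decomposition, and then invoke \autoref{thm:concave_cost_dual_sol_known}. You are in fact more careful than the paper on one point it glosses over (here and in \autoref{thm:convex_cost_same_dual_sol}): \autoref{thm:same_dual_sol} is stated for a fixed second marginal, and your continuity-plus-coordinate-slice argument is the right way to bridge that gap.
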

\begin{proof}
	For $v \in \calV_\emptyset$ we have by $\supp \mu^{(v)} \cap \supp \nu^{(v)} = \emptyset$ that $[\mu^{(v)} - \nu^{(v)}]_+ = \mu^{(v)}$ and $[\nu^{(v)} - \mu^{(v)}]_+ = \nu^{(v)}$. Furthermore, by \autoref{thm:same_dual_sol} in conjunction with \autoref{lemma:concave_cost_unique_potentials_supp} there exists a unique (up to additive constants) optimal potential $f^{(v)}$ for $c_1$ and $c_2$. Hence, the assertion follows from \autoref{thm:concave_cost_dual_sol_known}.
\end{proof}

\section{Convex Cost Functions on the Real Line} \label{sec:real_line}

We already discussed identifiability of convex cost functions on $\R^d$ in \autoref{subsec:convex_cost}. Our approach used the explicit formula of the OT map by Gangbo and McCann \cite{GaMc}. Notably, in the case of $d = 1$ the OT cost additionally admits a closed-form expression in terms of the marginals and the inducing convex function. This will allow us to derive identifiability results when we only observe the marginals and the OT costs.

Consider the real line $\R$ that is equipped with a cost function of the form $c(x, y) = h(x - y)$ where $h : \R \to [0, \infty)$ is convex with $h(0) = 0$, i.e., $c$ is induced by $h$. Denote with $\calH^1$ the set containing all such functions. According to \cite[Proposition~2.17]{Santambrogio2015} it holds that the OT cost between the probability measures $\mu^{(v)},\,\nu^{(v)} \in \calP(\R)$ with cumulative distribution functions $F_v$ and $G_v$, respectively, is given by
\begin{equation} \label{eq:def_a_h}
	\alpha_h(F_v, G_v) \defeq \int_0^1 h(F_v^{-1}[u] - G_v^{-1}[u]) \de{u} = \int_{-\infty}^\infty \frac{1}{\abs{[\Gamma_v' \circ \Gamma_v^{-1}](x)}} h(x) \de{x}\,,
\end{equation}
where $F_v^{-1}$, $G_v^{-1}$ are the quantile functions and $\Gamma_v(u) \defeq F_v^{-1}[u] - G_v^{-1}[u]$. As $h \geq 0$, note that this integral is always defined, and we assume that it is finite. Here, we assume that there exists a sequence $\{A_i\}_{i \in I}$ of disjoint open intervals on $(0,1)$ with $\lambda_1(\bigcup_{i \in I} A_i) = 1$ such that either $\Gamma_{v}' > 0$ or $\Gamma_{v}' < 0$ on $A_i$ for all $i \in I$. Then, we understand the r.h.s.\ of \eqref{eq:def_a_h} as
\begin{equation*}
	\frac{1}{\abs{[\Gamma_v' \circ \Gamma_v^{-1}](x)}} = \sum_{i \in I} \frac{1}{\abs{ [\Gamma_v' \circ (\restr{\Gamma_v}{A_i})^{-1}] (x)}} \indicfunc{x \in \Gamma_v(A_i)}\,.
\end{equation*}
In particular, we exclude that $F_v^{-1} = G_v^{-1}$ on a set of positive Lebesgue measure, but we allow for possibly infinite intersection points of $F_v^{-1}$ and $G_v^{-1}$.

As it is often too restrictive to assume that $h$ is integrable (e.g. if $h(x) = \abs{x}^p$, $p > 0$), we will reweight \eqref{eq:def_a_h} w.r.t.\ some $F_v$, say $F_0$. For example, in order to define a proper Hilbert space via \eqref{eq:def_a_h} let us introduce an absolutely continuous base measure with c.d.f. $F_0$ associated to a family of marginal pairs $\{(F_v, G_v)\}_{v\in\calV}$. Then,
\begin{equation*}
	\alpha_h(F_v, G_v) = \int_{-\infty}^\infty \frac{1}{f_0(x) \abs{[\Gamma_v' \circ \Gamma_v^{-1}](x)}} h(x) \de{F_0(x)}\,,
\end{equation*}
where $f_0 = F_0'$ is the Lebesgue density of $F_0$.

\begin{theorem}
	Let $F_0$ be some absolutely continuous base measure with Lebesgue density $f_0$ that is associated to a family of marginal pairs $\{(F_v, G_v)\}_{v\in\calV}$ such that $\{ 1 / (f_0 \abs{\Gamma_v' \circ \Gamma_v^{-1}}) \}_{v \in \calV}$ is a dense family of functions in $L^2(F_0)$. Then, for all $h \in \calH^1 \cap L^2(F_0)$ the pairings $\{ \alpha_h(F_v, G_v), F_v, G_v \}_{v\in\calV}$ determine $h$ $\lambda_1$-a.e.
\end{theorem}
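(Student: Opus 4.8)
The plan is to recast $\alpha_h(F_v,G_v)$ as an inner product in $L^2(F_0)$ and then to read off uniqueness of $h$ from the density hypothesis, exactly as in the elementary fact that a vector in a Hilbert space is determined by its inner products against a dense set. Concretely, I would introduce the weight $\psi_v \defeq 1/(f_0\,\abs{\Gamma_v' \circ \Gamma_v^{-1}})$, so that the reweighted form of \eqref{eq:def_a_h} reads $\alpha_h(F_v, G_v) = \int \psi_v h \de F_0 = \inner{\psi_v}{h}_{L^2(F_0)}$ for every $h \in \calH^1 \cap L^2(F_0)$. The first point to verify is that this is a bona fide $L^2(F_0)$ pairing: each $\psi_v$ lies in $L^2(F_0)$, being a member of a family that is dense in $L^2(F_0)$, and $h \in L^2(F_0)$ by assumption, so $\psi_v h \in L^1(F_0)$ by Cauchy--Schwarz and the integral in \eqref{eq:def_a_h} is finite and equals $\inner{\psi_v}{h}_{L^2(F_0)}$.

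Next I would take two candidate costs $h_1, h_2 \in \calH^1 \cap L^2(F_0)$ that generate the same data, i.e.\ $\alpha_{h_1}(F_v,G_v) = \alpha_{h_2}(F_v,G_v)$ for all $v \in \calV$ (the marginals $F_v, G_v$, hence the weights $\psi_v$, being observed). Subtracting the two identities from the previous paragraph and using linearity of the inner product yields $\inner{\psi_v}{h_1 - h_2}_{L^2(F_0)} = 0$ for every $v \in \calV$; note that although $\calH^1$ is only a convex cone, the difference $h_1 - h_2$ still belongs to the ambient Hilbert space $L^2(F_0)$, which is all that is needed. Thus $h_1 - h_2$ is orthogonal to the whole family $\{\psi_v\}_{v\in\calV}$, and since that family is dense in $L^2(F_0)$ I would approximate $h_1 - h_2$ by elements of it, pass to the limit in $\inner{\argdot}{h_1-h_2}_{L^2(F_0)}$, and conclude $\norm{h_1 - h_2}_{L^2(F_0)} = 0$, i.e.\ $h_1 = h_2$ holds $F_0$-almost everywhere.

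It remains to upgrade $F_0$-a.e.\ equality to $\lambda_1$-a.e.\ equality. Since $F_0 \ll \lambda_1$ with density $f_0$, the set $\{h_1 \neq h_2\}$ carries no $F_0$-mass, so $f_0$ vanishes $\lambda_1$-a.e.\ on it; when $f_0 > 0$ holds $\lambda_1$-a.e.\ (the natural case of a base measure with full support) this forces $\lambda_1(\{h_1\neq h_2\}) = 0$ directly, and otherwise one can still invoke that every element of $\calH^1$ is continuous, being a finite convex function on $\R$, so that agreement $F_0$-almost everywhere already forces agreement on $\supp F_0$. I expect the argument to be short; the only mildly technical points are this last comparison of negligible sets and the integrability check that turns \eqref{eq:def_a_h} into an honest inner product. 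The substantive content is carried entirely by the density hypothesis on $\{\psi_v\}_{v\in\calV}$, which is precisely a completeness-type condition on the observed family $\{(F_v,G_v)\}_{v\in\calV}$ — the link with statistical completeness announced in the introduction.
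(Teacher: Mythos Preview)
Your proposal is correct and follows exactly the same idea as the paper: rewrite $\alpha_h(F_v,G_v)$ as the $L^2(F_0)$ inner product $\inner{\psi_v}{h}$ and invoke density of $\{\psi_v\}_{v\in\calV}$ to conclude uniqueness. The paper's proof is in fact a single sentence to this effect; your write-up is more careful in checking integrability via Cauchy--Schwarz and in addressing the passage from $F_0$-a.e.\ to $\lambda_1$-a.e.\ equality, points the paper leaves implicit.
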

\begin{proof}
	As $L^2(F_0)$ is a Hilbert space, the inner product on a dense subset uniquely determines the function $h$.
\end{proof}

\subsection{Location-scale Families} \label{subsubsec:loc_scal_fam}

Unfortunately, $\Gamma_v' \circ \Gamma_v^{-1}$ is not easily computed and the conditions of the last theorem are not easily verified, in general. An exceptional case is given by location-scale families. We assume that all c.d.f.s. are of the form
\begin{equation*}
	G_{a,b}(x) = G \left(\frac{x-a}{b}\right)\,, \qquad (a,b) \in R\,,
\end{equation*}
where $G = G_{0,1}$ is the generator of the location-scale family and $R \defeq \R \times (0, \infty)$. $G$~then serves as the base measure in the last theorem. This relationship implies that
\begin{equation*}
	G_{a,b}^{-1}[u] = a + b\, G^{-1}[u]\,.
\end{equation*}
Hence, \eqref{eq:def_a_h} becomes
\begin{equation*}
	\alpha_h(G_{a,b}, G) = \int_0^1 h(a + (b - 1) G^{-1}[u]) \de{u}\,, \qquad (a, b) \in R\,.
\end{equation*}
If $G$ is strictly increasing and differentiable, we have that $g \defeq G' > 0$ and for $b \neq 1$ that $\Gamma_{a,b}$ is invertible and differentiable everywhere with
\begin{equation*}
	\Gamma_{a,b}^{-1}[x] = G \left(\frac{x - a}{b - 1}\right)\,, \qquad \Gamma_{a,b}'(u) = \frac{b - 1}{[g \circ G^{-1}](u)}\,.
\end{equation*}
Thus, \eqref{eq:def_a_h} becomes
\begin{align*}
	\alpha_h(G_{a,b}, G) &= \int_{-\infty}^\infty (b-1) \abs*{ \frac{b-1}{[G' \circ G^{-1} \circ G]([x - a] / [b - 1])}}^{-1} \de{x} \\
	&= \int_{-\infty}^\infty g\left(\frac{x - a}{b - 1} \right) h(x) \de{x}\,.
\end{align*}
For two functions $h_1,\, h_2 \in \calH^1$, assume now
\begin{equation*}
	\alpha_{h_1}(G_{a,b},G) = \alpha_{h_2}(G_{a,b}, G) \qquad \text{for all } (a,b)\in R\,.
\end{equation*}
Reparametrizing and setting $h \defeq h_1 - h_2$, this can be written as
\begin{equation} \label{eq:g_transform_def}
	\calI_g[h](a,b) \defeq \int_{-\infty}^\infty g\left(\frac{x-a}{b} \right) h(x) \de{x} = 0\,.
\end{equation}
We call this the $g$-transform of $h$, which induces a linear integral operator on $L^1(G)$. Now it becomes apparent that uniqueness of $h$ is intimately related to injectivity of linear Fredholm operators of first kind (see \cite{Kress2014}) as well as to statistical completeness, see e.g. \cite[Section~1.5]{Pfanzagl1994}. Here, a family of probability measures $\calP$ on a joint measurable space $(\calX, \mathcal{A})$ is denoted as complete, if for every $\calP$-integrable function $f : (\calX, \mathcal{A}) \to \R$, it holds that $\int f \de{P} = 0$ for all $P \in \calP$ implies that $f \equiv 0$ $P$-a.e.\ for all $P \in \calP$.

\begin{theorem} \label{thm:convex_cost_onedim_g_transform}
	Assume that the cost $c$ is induced by a function $h \in \calH' \subseteq \calH^1$. Furthermore, suppose that the marginal distributions are related by a location-scale family with generator $G$ that is strictly increasing and differentiable with Lebesgue density $g$. Then, the cost $c$ is identifiable in the class of cost functions induced by $\calH' \cap L^1(G)$ if and only if the $g$-transform is injective on $\calH' \cap L^1(G)$.
\end{theorem}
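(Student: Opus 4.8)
The plan is to unwind the definitions so that the claimed equivalence becomes essentially a tautology, with all the real content already packed into the derivation of the $g$-transform representation carried out just above the statement. Recall that for location-scale marginals $F_v = G_{a,b}$, $G_v = G$ with $b \neq 1$, formula \eqref{eq:def_a_h} was shown to reduce to $\alpha_h(G_{a,b}, G) = \int_{-\infty}^\infty g((x-a)/(b-1)) h(x) \de x$, and a reparametrization turned the difference $\alpha_{h_1} - \alpha_{h_2}$ into $\calI_g[h_1 - h_2]$. So I would first state precisely what ``identifiable in the class induced by $\calH' \cap L^1(G)$'' means here: for every $h_1, h_2 \in \calH' \cap L^1(G)$, if $\alpha_{h_1}(G_{a,b}, G) = \alpha_{h_2}(G_{a,b}, G)$ for all $(a,b) \in R$ (equivalently, for all $(a,b)$ with $b \neq 1$, since the $b = 1$ case gives $\alpha_h(G_{1,1},G) = h(0) = 0$ trivially for $h \in \calH^1$ and carries no information), then $h_1 = h_2$ as functions on $\R$.

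Next I would prove the two implications. For the ``if'' direction, suppose the $g$-transform is injective on $\calH' \cap L^1(G)$, and let $h_1, h_2 \in \calH' \cap L^1(G)$ have equal optimal costs across the location-scale family. Set $h \defeq h_1 - h_2$; by the reparametrization leading to \eqref{eq:g_transform_def} we get $\calI_g[h](a,b) = 0$ for all $(a,b) \in R$. The subtlety is that $\calI_g$ is stated to be injective on $\calH' \cap L^1(G)$, whereas $h = h_1 - h_2$ is a difference of two such functions, not obviously an element of the class; I would handle this by noting $\calI_g$ is a linear operator on $L^1(G)$, so $\calI_g[h] = \calI_g[h_1] - \calI_g[h_2] = 0$, and then observe that injectivity of $\calI_g$ on $\calH' \cap L^1(G)$ is meant in the sense that $\calI_g[h_1] = \calI_g[h_2]$ for $h_1,h_2$ in the class forces $h_1 = h_2$ — i.e.\ the natural reading for an identifiability statement. (Alternatively one phrases injectivity as triviality of the kernel intersected with the affine differences.) Either way we conclude $h_1 = h_2$, hence $c_1 = c_2$, giving identifiability. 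For the ``only if'' direction, assume $c$ is identifiable in the class, and let $h_1, h_2 \in \calH' \cap L^1(G)$ satisfy $\calI_g[h_1] = \calI_g[h_2]$, i.e.\ $\calI_g[h_1 - h_2] = 0$ pointwise in $(a,b)$. Reading the chain of equalities above backwards, this says $\alpha_{h_1}(G_{a,b},G) = \alpha_{h_2}(G_{a,b},G)$ for all $(a,b) \in R$ with $b \neq 1$; for $b = 1$ both sides equal $h_i(0) = 0$ so equality holds there too. Identifiability then yields $h_1 = h_2$, which is exactly injectivity of $\calI_g$ on $\calH' \cap L^1(G)$.

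The only genuine care needed — and the step I'd flag as the main (mild) obstacle — is the bookkeeping around two points: first, that the reparametrization $(a,b) \mapsto (a, b-1)$ is a bijection of $R$ onto $\R \times (\R \setminus \{0\})$, so that ranging over all location-scale parameters with $b \neq 1$ is the same as ranging over all arguments of $\calI_g$, and that the omitted scale value $b = 1$ contributes no constraint because $\Gamma_{a,1} \equiv a$ is constant and \eqref{eq:def_a_h} is not directly applicable there, yet $\alpha_h(G_{a,1}, G) = \int_0^1 h(a) \de u = h(a)$ — wait, this is not automatically zero, so I should instead simply note that $G_{a,1}$ vs.\ $G$ are a genuine translation pair and \eqref{eq:def_a_h} still gives $\alpha_h(G_{a,1},G) = h(a)$; equality of these for $h_1$ and $h_2$ at the single degenerate slice is implied by continuity from the $b \neq 1$ slices and thus adds nothing. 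Second, one must make sure ``$h_1 = h_2$'' is claimed only up to the ambiguity the class allows; since $\calH^1$ already normalizes $h(0) = 0$, there is no additive-constant freedom, and equality as elements of $\calH' \cap L^1(G)$ is equality $\lambda_1$-a.e., which for continuous convex functions is genuine equality. With these remarks in place the proof is just the two-line dictionary translation, and the substance of the result is the analytic reduction to the $g$-transform already established before the theorem statement.
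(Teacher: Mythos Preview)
Your proposal is correct and mirrors the paper's treatment: the paper gives no separate proof of this theorem, since the equivalence is immediate from the $g$-transform representation of $\alpha_{h_1}-\alpha_{h_2}$ derived in the lines preceding the statement, and your argument simply makes that translation explicit in both directions. The bookkeeping you flag about the $b=1$ slice and about reading ``injective on $\calH'\cap L^1(G)$'' as ``$\calI_g[h_1]=\calI_g[h_2]$ for $h_1,h_2$ in the class forces $h_1=h_2$'' fills in details the paper leaves implicit.
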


\begin{remark}
	Let $\calH' \subseteq \calH^1$ be a subset of continuous functions that are ordered in the sense that for all $h_1,\,h_2 \in \calH'$ it holds that either $h_1 \leq h_2$ or $h_1 \geq h_2$ $\lambda_1$-a.e. Then, it follows from \eqref{eq:g_transform_def} that $c$ is identifiable.
\end{remark}

Note that \cite{Asanov2019} obtain injectivity of Fredholm operators of the first kind under certain conditions on the integration kernel. In our setting, this translates to various (complicated) conditions on $g$ and its derivatives.

\subsubsection*{Location Families}

Notably, for fixed $b$ (w.l.o.g.\ $b = 1$) the $g$-transform is equal to the convolution of $g$ and $h$,
\begin{equation*}
		(g * h)(a) \defeq \int_{-\infty}^\infty g(x-a)h(x) \de{x} = \calI_g[h](a,1)\,.
\end{equation*}
Denoting with $\calF : L^1(\lambda_1) \to \calC(\R)$ the Fourier transform (operator), we get that $\calF(g * h) = \calF(g) \cdot \calF(h)$. In particular, if $\calF(g) \neq 0$ it follows that $\calF(h) = 0$ and thus $h = 0$. However, for this to hold we require that $h \in L^1(\lambda_1)$. This is a very strong condition on $h = h_1 - h_2$ that severely limits the class $\calH' \subseteq \calH^1$.

\begin{example} \label{ex:loc_scale_gaussian_one_dim}
	Let the location-scale family be the normal distributions, we then obtain the underlying density
	\begin{equation*}
		g(x) = \frac{1}{\sqrt{2\pi}} \exp\left( -\frac{1}{2}x^2 \right)\,,
	\end{equation*}
	and
	\begin{equation*}
		\calI_g[h](a,b) = \frac{1}{\sqrt{2 \pi}} \int_{-\infty}^\infty \exp\left(-\frac{1}{2} \frac{(x-a)^2}{b^2}\right) h(x) \de{x} \,.
	\end{equation*}
	Notably, for $b = \sqrt{2}$ this is equal (up to scaling) to the Weierstrass transform, see e.g.\ \cite{Bilodeau1961}, which is known to be injective on $L^1(G)$, and similarly for any $b > 0$. Hence, identifiability holds true for the class of $G$-integrable cost functions.
\end{example}

\begin{example}
	Let the location-scale family be the Cauchy distributions, i.e., we have the underlying density
	\begin{equation*}
		g(x) = \frac{1}{\pi} \frac{1}{1 + x^2}\,,
	\end{equation*}
	and
	\begin{equation*}
		\calI_g[h](a,b) = \frac{1}{\pi} \int_{-\infty}^\infty \frac{1}{1 + \left( \frac{x-a}{b} \right)^2} h(x) \de{x}\,.
	\end{equation*}
	For $b = 1$, this is also called the Poisson transform \cite{Pollard1955}. Here, injectivity holds for all $h \in \calH^1 \in L^1(G)$ such that the function $t \mapsto \int_{t_0}^t h(x) \de{x}$ is of bounded variation for all $t_0 \in \R$. This holds true if $h$ has finite total variation on $\R$, i.e., if $\int_{-\infty}^\infty \abs{h'(x)} \de{x} < \infty$. Injectivity of the Poisson transform has also been shown by \cite{Mattner1992} where also other location families with injective $g$-transforms like the log-gamma distribution or Student's t-distribution are discussed to which \autoref{thm:convex_cost_onedim_g_transform} applies as well.
\end{example}

\begin{remark}
	If we restrict $\calH' \subseteq \calH^1$ to only include bounded functions (in statistical terms often called boundedly complete \cite[Section~1.5]{Pfanzagl1994}), injectivity of the $g$-transform for a location family can be fully characterized with the Fourier transform of the probability measure with density $g$ \cite{Mattner1993}. Namely, the $g$-transform is injective if and only if the Fourier transform has no zeros.
\end{remark}

\subsubsection*{Scale Families}

\begin{example}
	Let the location-scale family be the Laplace distributions, i.e., it is generated by the density
	\begin{equation*}
		g(x) = \frac{1}{2} \exp(-\abs{x})\,,
	\end{equation*}
	and
	\begin{equation*}
		\calI_g[h](a,b) = \frac{1}{2} \int_{-\infty}^\infty \exp\left(-\frac{\abs{x-a}}{b}\right) h(x) \de{x}\,.
	\end{equation*}
	For $a = 0$ and symmetric $h$, this reduces to the (one-sided) Laplace transform
	\begin{equation*}
		\calI_g[h](0, 1/b) = \int_{0}^\infty \exp(-x b) h(x) \de{x}\,.
	\end{equation*}
	Note that this is the $g$-transform for the scale family of exponential distributions with underlying density $x \mapsto \exp(-x) \indicfunc{x \geq 0}$.

	Furthermore, if $\calH' \subseteq \calH^1$ is the subset of continuous and symmetric functions that are of any exponential order, i.e., for all $h \in \calH'$ and $b > 0$ there exists a constant $M > 0$ such that $h(x) \leq M \exp(xb)$ for all $x \geq 0$, injectivity of the $g$-transform in $\calH'$ follows from Post's inversion formula \cite{Post1930}.
\end{example}

\subsection{Radially Symmetric Cost Functions}

Unfortunately, there is no analogue for \eqref{eq:def_a_h} in $\R^d$, $d > 1$, see also \autoref{subsec:convex_cost}. However, for costs $c : \R^d \times \R^d \to [0, \infty)$ that are induced by the Euclidean norm $\norm{\argdot}$, that is $c(x,y) = h(\norm{x-y})$ for some $h : [0, \infty) \to [0, \infty)$, there are certain families of marginals such that the $d$-dimensional OT problem reduces to a one-dimensional one. For instance, given a unit vector $u \in \R^d$ and base $r \in \R^d$, denote the affine transformation $A^{u,r} : \R \to \R^d$, $x \mapsto x u + r$. Then, it holds for two probability measures $\mu$ and $\nu$ on $\R$ that
\begin{equation*}
	\OT_c(A^{u,r}_\#\mu, A^{u,r}_\#\nu) =\OT_{\tilde{c}}(\mu, \nu)\,,
\end{equation*}
where $\tilde{c}$ is induced by $h \circ \abs{\argdot}$. Hence, \autoref{thm:convex_cost_onedim_g_transform} can be extended to cost functions that are induced by $\calH^1_{\norm{\argdot}} = \{ h \circ \norm{\argdot} : h \in \calH^1\}$.

\begin{corollary} \label{cor:convex_cost_d_dim_g_transform}
	Assume that the cost function $c$ is induced by $\calH'_{\norm{\argdot}}$ with $\calH' \subseteq \calH^1$. Furthermore, suppose that the marginal distributions are affine pushforwards of a location-scale family with generator $G$ that is strictly increasing and differentiable with Lebesgue density $g$. Then, the cost $c$ is identifiable in the class of cost functions induced by $[\calH' \cap L^1(G)]_{\norm{\argdot}}$ if and only if the $g$-transform is injective on $\calH'_{\abs{\argdot}} \cap L^1(G)$.
\end{corollary}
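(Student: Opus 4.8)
The plan is to reduce \autoref{cor:convex_cost_d_dim_g_transform} to the one-dimensional statement \autoref{thm:convex_cost_onedim_g_transform} by exploiting the dimension-reduction identity stated just before the corollary, namely that for a unit vector $u \in \R^d$, a base point $r \in \R^d$ and the affine map $A^{u,r} : x \mapsto xu + r$, one has $\OT_c(A^{u,r}_\# \mu, A^{u,r}_\# \nu) = \OT_{\tilde c}(\mu, \nu)$ where $\tilde c$ is induced by $h \circ \abs{\argdot}$. First I would fix two candidate inducing functions $h_1, h_2 \in \calH' \cap L^1(G)$ and set $c_i$ to be the cost on $\R^d$ induced by $h_i \circ \norm{\argdot}$. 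Suppose the observed OT information (marginals, which are affine pushforwards $A^{u,r}_\# G_{a,b}$ and $A^{u,r}_\# G$ of the location-scale family, together with the optimal values) agrees for $c_1$ and $c_2$. Applying the reduction identity, the equality $\OT_{c_1}(A^{u,r}_\# G_{a,b}, A^{u,r}_\# G) = \OT_{c_2}(A^{u,r}_\# G_{a,b}, A^{u,r}_\# G)$ becomes $\alpha_{h_1}(G_{a,b}, G) = \alpha_{h_2}(G_{a,b}, G)$ for all $(a,b) \in R$, where $\alpha_{h_i}$ is the one-dimensional OT cost functional \eqref{eq:def_a_h} with the cost induced by $h_i \circ \abs{\argdot}$. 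By the computation leading to \eqref{eq:g_transform_def}, this is exactly $\calI_g[h_1 - h_2] \equiv 0$. If the $g$-transform is injective on $\calH'_{\abs{\argdot}} \cap L^1(G)$, then $h_1 = h_2$ $\lambda_1$-a.e., hence $c_1 = c_2$, giving identifiability.

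For the converse, I would argue contrapositively: if the $g$-transform fails to be injective on $\calH'_{\abs{\argdot}} \cap L^1(G)$, pick $h_1 \neq h_2$ in $\calH' \cap L^1(G)$ with $\calI_g[h_1 - h_2] \equiv 0$; running the same reduction identity in reverse shows the costs $c_1$ and $c_2$ induced by $h_i \circ \norm{\argdot}$ produce identical OT values on the affine-pushforward marginals, so $c$ is not identifiable in the class $[\calH' \cap L^1(G)]_{\norm{\argdot}}$. The only genuine subtlety is checking that the dimension-reduction identity applies under the stated integrability: one needs $h_i \circ \norm{\argdot} \in L^1$ against the relevant marginals so that the one-dimensional functional $\alpha_{h_i}$ is finite and \eqref{eq:def_a_h} is valid — this is precisely why the corollary restricts to $L^1(G)$-integrable cost functions, matching the hypothesis of \autoref{thm:convex_cost_onedim_g_transform}.

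I expect the main obstacle to be purely bookkeeping: one must be careful that the affine pushforwards $A^{u,r}_\# G_{a,b}$ really do realize a (one-dimensional) location-scale family in the sense required by \autoref{thm:convex_cost_onedim_g_transform}, and that the reduction identity $\OT_c(A^{u,r}_\# \mu, A^{u,r}_\# \nu) = \OT_{\tilde c}(\mu, \nu)$ genuinely holds — i.e., that an optimal plan for the reduced problem lifts to an optimal plan on $\R^d$ and vice versa. The lifting direction is immediate since $(A^{u,r} \times A^{u,r})_\#$ sends couplings to couplings and $\tilde c(x,y) = c(A^{u,r}x, A^{u,r}y)$ by construction; the reverse inequality requires that no coupling on $\R^d$ can beat the reduced cost, which follows because $c$ depends only on $\norm{x - y}$ and any transport map on the line $\{xu + r\}$ is constrained to that line. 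Once the identity is in hand, the rest is a direct translation through \eqref{eq:g_transform_def}, so I anticipate no further difficulty.
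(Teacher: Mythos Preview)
Your proposal is correct and follows exactly the intended route: the paper treats this corollary as immediate from the dimension-reduction identity $\OT_c(A^{u,r}_\#\mu, A^{u,r}_\#\nu) = \OT_{\tilde c}(\mu,\nu)$ stated just before it, together with \autoref{thm:convex_cost_onedim_g_transform}, and provides no separate proof. Your verification of the reduction identity and the bookkeeping about $L^1(G)$-integrability are the only things one might add, and you have handled them appropriately.
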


\begin{remark}
	Note that the one-dimensional affine subspace where the probability measures are projected to in \autoref{cor:convex_cost_d_dim_g_transform} can be different for each observed pair of marginals.
\end{remark}

\begin{example}
	Let the underlying one-dimensional location-scale family be the normal distributions, recall also \autoref{ex:loc_scale_gaussian_one_dim}. In this case, for a one-dimensional normal distribution $\calN_1(a,b^2)$ with mean $a \in \R$ and variance $b^2 > 0$ it holds that
	\begin{equation*}
		A^{u,r}_\# \calN_1(a,b^2) = \calN_d(a u + r, b^2 u u^\transp)\,.
	\end{equation*}
	In particular, this implies that we have identifiability of cost functions induced by $[\calH^1_{\abs{\argdot}} \cap L^1(G)]_{\norm{\argdot}}$ if we observe the OT costs between multivariate normal distributions with covariance matrices that have rank $1$.
\end{example}

\section{Conclusion and Discussion}

In this work, identifiability of the cost function in the inverse OT problem has been conducted for continuous probability distributions. We investigated the class of convex and concave cost functions and gave identifiability criteria depending on certain combinations of information on the plans, the potentials and the optimal values. We discuss some extensions and open problems which are not addressed in this work:

\begin{enumerate}
	\item More general cost functions and ground spaces. The case where distributions are finitely supported or discrete. In this case, the identifiability conditions when the plans and total costs are known are obtained as a linear system of equations and can be dealt with by elementary tools from linear algebra. The case in which the OT plans are unknown is more complicated since, in general, the uniqueness of OT potentials does not hold. Hence, we cannot apply the strategy used in this work to recover the OT potentials from the OT cost values.
	\item The (entropy) regularized case. Here, similar results to those obtained in this work can be conjectured because, in general, regularized transport potentials are usually unique. Furthermore, once the values of the potentials are obtained, and knowing the values of the marginal distributions, the optimality conditions (see \cite{nutz2021introduction}) allow us to transform the inverse OT problem into a standard linear inverse problem.
    \item The unbalanced case. In this work, we always assume that the observed marginal measures $\{ (\mu^{(v)}, \nu^{(v)} )\}_{v \in \calV}$ are probability measures and therefore have the same total mass. When this is not the case, we have an unbalanced OT problem for which there are various formulations, see e.g. \cite{Benamou2003numerical,Chizat2018,heinemann2023kantorovich}. Due to the lack of optimal maps, we believe that our approach cannot be extended to deal with identifiability of the cost function for unbalanced OT.
    \item We give identifiability criteria for the underlying cost function $c$, but do not touch upon its estimation. If the cost function $c$ is identifiable, the question arises if, e.g., the procedure of \cite{InverseSIam} converges to the unique underlying cost function. This will be postponed to future investigations.
	\item Finally, the problem of the existence of a solution of the inverse OT also remains untreated. In future work, we will investigate the necessary and/or sufficient conditions for the inverse OT problem to have at least one solution. That is, the existence of a cost $c$ such that
	\begin{equation} \label{eq:existence}
	\alpha^{(v)} = \OT_c(\mu^{(v)}, \nu^{(v)}) = \int c \de{\pi^{(v)}} = \int f^{(v)} \de{\mu^{(v)}} + \int g^{(v)} \de{\nu^{(v)}}\,,
\end{equation}
	for a given set $\{ (\mu^{(v)}, \nu^{(v)}) \}_{v \in \calV} \subseteq \calP(\R^d) \times \calP(\R^d)$. Note that this is a very different problem from identifiability. Namely, it needs to be shown that a cost function $c$ with \eqref{eq:existence} exists and that the observed OT information is optimal for this $c$. For the latter, we need to consider all possible plans (or potentials) and show that they cannot be better in terms of the OT cost than the observations for this $c$.
\end{enumerate}

\printbibliography[title = References]

\end{document}